\newcommand{\R}{\mathbb{R}}
\newcommand{\N}{\mathbb{N}}
\newcommand{\eps}{\varepsilon}
\newtheorem{theorem}{Theorem}
\newtheorem{lemma}[theorem]{Lemma}
\newtheorem{corollary}[theorem]{Corollary}
\newtheorem{proposition}[theorem]{Proposition}
\theoremstyle{definition}
\newtheorem{definition}[theorem]{Definition}
\newtheorem{example}[theorem]{Example}
\newcommand{\brac}[1]{\left ( #1 \right)}
\newcommand{\chgd}[1]{{ #1}}
\newcommand{\chgdtwo}[1]{{#1}}
\numberwithin{theorem}{section} \numberwithin{equation}{section}
\author{Josh Ascher}
\thanks{joa71@pitt.edu}
 \author{Armin Schikorra}
 \thanks{armin@pitt.edu}
\address{Department of Mathematics, University of Pittsburgh, Pittsburgh, PA 15261, USA.}
\title{Carnot-\chgd{Carath\'eodory} and \chgd{Kor\'anyi}-Geodesics in the Heisenberg Group}
\date{Fall 2021}
\begin{document}

\begin{abstract}
This paper is part of an undergraduate research project. We discuss the Heisenberg group $\mathbb{H}_1$, the three-dimensional space $\mathbb{R}^3$ equipped with one of two equivalent metrics, the \chgd{Kor\'anyi}- and Carnot-\chgd{Carath\'eodory} metric. We show that the notion of length of curves for both metrics coincide, and that shortest curves, so-called geodesics, exist.
\end{abstract}

\maketitle 
\tableofcontents

\newenvironment{myindentpar}[1]%
 {
%  \begin{list}{}%
        %  {\setlength{\leftmargin}{#1}}%
        %  \item[]%
 }
 {
%  \end{list}
}

\section{Introduction}
The Heisenberg group $\mathbb{H}_1$ is a subject of intensive study, as a special case of sub-Riemannian manifolds or Carnot groups, see \cite{G96} or \cite{CDPT07}.

From the point of view of Analysis, $\mathbb{H}_1$ consists of all the points $p = (p^1,p^2,p^3) \in \mathbb{R}^3$, where $\mathbb{R}^3$ denotes the usual Euclidean three-dimensional space. However, the distance between two points $p,q \in \mathbb{R}^3$ is given by a non-Euclidean metric $d(p,q)$. Actually, there are two typical metrics used in the Heisenberg group $\mathbb{H}_1$, and we begin by describing the first one, the \emph{Carnot-\chgd{Carath\'eodory}}-metric $d_{cc}(p,q)$ of $\mathbb{H}_1$:
Take any (for now continuously differentiable) curve $\gamma: [0,1] \to \R^3$ with $\gamma(0) = q$ and $\gamma(1) = p$. From calculus we know that the length of a curve is given by 
\begin{equation}\label{eq:lengtheuclidean}
\mathcal{L}(\gamma) = \int_{[0,1]} |\dot{\gamma}(t)| dt,
\end{equation}
where $\dot{\gamma}$ denotes the derivative of $\gamma$. If we consider the minimal possible length of curves $\gamma: [0,1] \to \R^3$ that are continuously differentiable and connect $p$ to $q$ in the sense that  $\gamma(0) = p$ and $\gamma(1) = q$, then one can show that this minimal length is exactly the Euclidean distance $|p-q|$,
\[
|p-q|_{\R^3} = \inf_{\gamma \in X(p,q)} \mathcal{L}(\gamma)
\]
where 
\[
X(p,q) = \left \{\gamma: [0,1] \to \mathbb{R}^3: \quad \text{continuously differentiable, }\gamma(0) = p,\, \gamma(1) = q \right \}.
\]
The Carnot-\chgd{Carath\'eodory} metric is also the infimum of the lengths of curves connecting $p$ and $q$, however those curves have to be \emph{horizontal}, meaning that $\dot{\gamma}(t)$ has to belong to the horizontal space $H_{\gamma(t)} \mathbb{H}_1$ for each $t \in (0,1)$, which is spanned by the vectors 
\[
H_{p} \mathbb{H}_1 = {\rm span} \left \{ 
\left (\begin{array}{c}
1\\
0\\
2p^2
\end{array}\right )
, \left (\begin{array}{c}
0\\
1\\
-2p^1\end{array}\right )
\right \}.
\]
That is for each $t \in (0,1)$ there must be some $\lambda_1(t)$ and $\lambda_2(t)$ such that 
\[
\dot{\gamma}(t) = \lambda_1(t) \left (\begin{array}{c}
1\\
0\\
2\gamma^2(t)
\end{array}\right ) + \lambda_2(t)\left (\begin{array}{c}
0\\
1\\
-2\gamma^1(t)\end{array}\right ),
\]
or, taking $\lambda_1(t) = \dot{\gamma}^1(t)$ and $\lambda_2(t) = \dot{\gamma}^2(t)$, equivalently,
\begin{equation}\label{eq:horizontal}
\dot{\gamma}^3(t) =  -2 \brac{\gamma^1(t) \dot{\gamma}^2(t) - \gamma^2(t) \dot{\gamma}^1(t)} \quad \forall t \in (0,1).
\end{equation}
For such curves we define the length
\[
\mathcal{L}_{cc}(\gamma) := \int_{[0,1]} \sqrt{|\lambda_1(t)|^2 + |\lambda_2(t)|^2} dt \equiv \int_{[0,1]} \sqrt{|\dot{\gamma}_1(t)|^2+|\dot{\gamma}_2(t)|^2} dt.
\]
The Carnot-\chgd{Carath\'eodory} length $d_{cc}(p,q)$ is then given by 
\begin{equation}\label{eq:dccpq}
d_{cc}(p,q) = \inf_{\gamma \in Y(p,q)} \mathcal{L}_{cc}(\gamma)
\end{equation}
where 
\[
Y(p,q) = \left \{\gamma: [0,1] \to \mathbb{R}^3: \quad \text{continuously differentiable, }\gamma(0) = p,\, \gamma(1) = q,\, \text{\eqref{eq:horizontal} holds} \right \}.
\]
Observe that this is very similar to curves $\gamma$ into a Riemannian manifold $\mathcal{M} \subset \R^3$: any differentiable curve $\gamma: [0,1] \to \mathcal{M}$ satisfies $\gamma(t) \in T_{\gamma(t)} \mathcal{M}$, where $T_{p} \mathcal{M}$ is the tangent space of the manifold $\mathcal{M}$, and if we want to find the distance between two points $p$ and $q$ on the manifold, it makes sense to define this distance as the minimal length of curves tangent to the manifold at every point and connecting $p$ and $q$.
So from this perspective, the Heisenberg group is $\R^3$ with a ``strange'' tangent plane distribution (and since it is strange we call it horizontal plane distribution instead) -- the strangeness of the Heisenberg group is that its horizontal plane distribution cannot be written as a tangent space of any manifold $\mathcal{M}$, that is the horizontal plane distribution is not integrable in the sense of the Frobenius' theorem. Here is actually where the ``group'' of the Heisenberg group enters, the vectors spanning the horizontal space $H_{p} \mathbb{H}_1$ are left-invariant vector fields for a group structure -- but we will not pursue this point of view further here.

It is known that for each $p,q \in \R^3$ the infimum in \eqref{eq:dccpq} is attained, i.e. there exists a shortest curve $\gamma$, called \emph{geodesic} such that 
\[
\mathcal{L}_{cc} (\gamma) = d_{cc}(p,q),
\]
see e.g. Haj\l{}asz-Zimmerman \cite[(1.3)]{HZ}. In particular between any two points $p,q \in \R^3$ there exist horizontal curves. \chgd{Let us remark that for more general sub-Riemannian geometry it a very deep result, called Chow–Rashevskii theorem, that $d_{cc}(p,q)$ is even \emph{finite} for all points $p,q$, cf. \cite{M02}.}

While the above notion of distance $d_{cc}(p,q)$ is attractive from a geometric point of view, it is not easily computable (given $p$ and $q$ we first need to find the shortest curve $\gamma$ between then, then compute its length).

The other metric we want to consider, the \emph{\chgd{Kor\'anyi}-metric}, is \chgd{much} easier to compute. It simply is given by
\[
d_{K}(p,q) :=(\brac{|p_1-q_1|^2 + |p_2-q_2|^2}^2 + \left |p_3 - q_3 + 2(p_2 q_1 - p_1 q_2)\right |^2)^{\frac{1}{4}}
\]
There is also a more group-theoretic motivation for $d_{K}(p,q) = \|p^{-1} \ast q\|_{\mathbb{H}_1}$, but we will also not pursue this aspect further here, we refer the interested reader to \cite{CDPT07}.

Any metric space naturally is equipped with a notion of length of curves, see \Cref{def:length}, which gives us the notion of a \chgd{Kor\'anyi}-length $\mathcal{L}_K(\gamma)$.

We will first prove the following result.

\begin{theorem}\label{th:existencegeodesic}
Let $p,q \in \R^3$. Then there exists a shortest continuous curve (i.e. a geodesic) $\gamma: [0,1] \to \R^3$, $\gamma(0)=p$, $\gamma(1) =q$ such that 
\[
\mathcal{L}_{K}(\gamma) = \inf_{\tilde{\gamma} \in \tilde{X}(p,q)} \mathcal{L}_{K}(\tilde{\gamma}),
\]
where
\[
\tilde{X}(p,q) \coloneqq \left \{\gamma: [0,1] \to \mathbb{R}^3: \quad \text{continuous, }\gamma(0) = p,\, \gamma(1) = q \right \}.
\]
Observe the difference to $X(p,q)$ above is that curves do not need to be differentiable.
\end{theorem}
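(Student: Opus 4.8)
The plan is to use the direct method of the calculus of variations: take a length-minimizing sequence of curves, extract a uniformly convergent subsequence via Arzel\`a--Ascoli, and show the limit is admissible and inherits the minimal length through lower semicontinuity. Write $L_0 \coloneqq \inf_{\tilde{\gamma} \in \tilde{X}(p,q)} \mathcal{L}_K(\tilde{\gamma})$. The first point---and a genuinely necessary one---is that $L_0 < \infty$; note that the naive Euclidean segment $t \mapsto (1-t)p + tq$ does \emph{not} work, since a short computation shows that $d_K$ along it behaves like $|s-t|^{1/2}$ whenever the endpoints differ in a ``vertical'' direction, forcing infinite $d_K$-length. Instead I would connect $p$ and $q$ by a finite concatenation of integral curves of the horizontal vector fields spanning $H_p\mathbb{H}_1$; along such a curve $\gamma(t) = (x_1 + t, x_2, x_3 + 2x_2 t)$ (and its analogue for the second field) one computes directly that $d_K(\gamma(s),\gamma(t)) = |s-t|$, so each segment has finite $d_K$-length, and since these two fields suffice to reach any point from any other, $L_0 < \infty$. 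The triangle inequality for $d_K$ moreover gives $\mathcal{L}_K(\tilde{\gamma}) \ge d_K(p,q)$ for every $\tilde{\gamma} \in \tilde{X}(p,q)$. Now fix a minimizing sequence $\gamma_n \in \tilde{X}(p,q)$ with $\mathcal{L}_K(\gamma_n) \to L_0$, and assume $\mathcal{L}_K(\gamma_n) \le L_0 + 1$ for all $n$.

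The second step normalizes the parametrizations and provides compactness. Using that $d_K$ is a metric, the length from \Cref{def:length} is additive and $t \mapsto \mathcal{L}_K(\gamma_n|_{[0,t]})$ is continuous and nondecreasing; reparametrizing each $\gamma_n$ proportionally to its arclength yields a curve (still in $\tilde{X}(p,q)$, with the same length) satisfying
\[
d_K(\gamma_n(s), \gamma_n(t)) \le (L_0+1)\,|s-t| \qquad \text{for all } s,t \in [0,1],
\]
so the family $\{\gamma_n\}$ is equicontinuous. Each $\gamma_n$ starts at $p$ and has length $\le L_0+1$, hence stays in the closed ball $\bar{B}_{d_K}(p, L_0+1)$; because the $d_K$-topology agrees with the Euclidean one and $d_K$-bounded sets are Euclidean-bounded, this ball is compact (Heine--Borel), so all curves take values in one compact set $K$.

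The third step is the extraction and the limit. Arzel\`a--Ascoli for maps $[0,1] \to (K, d_K)$ yields a subsequence converging uniformly to a continuous $\gamma \colon [0,1] \to \R^3$; the endpoint values pass to the limit, so $\gamma(0) = p$, $\gamma(1) = q$ and $\gamma \in \tilde{X}(p,q)$. For minimality I would use lower semicontinuity of length: for any partition $0 = t_0 < \dots < t_N = 1$, continuity of $d_K$ together with $\gamma_n(t_i) \to \gamma(t_i)$ give $\sum_i d_K(\gamma(t_{i-1}), \gamma(t_i)) = \lim_n \sum_i d_K(\gamma_n(t_{i-1}), \gamma_n(t_i)) \le \liminf_n \mathcal{L}_K(\gamma_n) = L_0$, and taking the supremum over partitions yields $\mathcal{L}_K(\gamma) \le L_0$. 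Since the reverse inequality holds by definition of $L_0$, we get $\mathcal{L}_K(\gamma) = L_0$, proving $\gamma$ is a geodesic.

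I expect the main obstacle to be the arclength reparametrization in the second step: one must verify that $s \mapsto \mathcal{L}_K(\gamma_n|_{[0,s]})$ is continuous (so that no length is ``lost'') and that the reparametrized curve is genuinely $d_K$-Lipschitz, which requires handling subintervals on which $\gamma_n$ is constant and where the length function fails to be strictly increasing. The compactness input is softer but also needs care, namely establishing the Heine--Borel property of $(\R^3, d_K)$ by comparing $d_K$ with the Euclidean distance. Once these are in place, finiteness of $L_0$, Arzel\`a--Ascoli, and lower semicontinuity are routine.
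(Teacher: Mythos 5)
Your proposal is correct and follows essentially the same skeleton as the paper's argument: reduce to a compact set, reparametrize a minimizing sequence proportionally to arclength to get uniform Lipschitz bounds (the paper's \Cref{pr:reparam}, including the same subtlety about intervals where the length function is not strictly increasing), apply Arzel\'a--Ascoli, and conclude by lower semicontinuity of $\mathcal{L}_K$ under pointwise convergence; the comparison of $d_K$ with the Euclidean metric that you flag as the ``softer'' compactness input is exactly the content of \Cref{la:compactsets} and \Cref{la:largeeuclidlargeK}. The one place where you genuinely diverge is the finiteness of the infimum $L_0$: the paper imports the existence of a smooth horizontal $\mathcal{L}_{cc}$-geodesic from \cite{HZ} and then invokes \Cref{co:diffofgamma} (which rests on the $C^2$ Taylor-expansion computation in \Cref{th:derivative2}) to see that this curve has finite Kor\'anyi length, whereas you build an explicit competitor from finitely many integral curves of the two horizontal vector fields and verify by hand that $d_K(\gamma(s),\gamma(t))=|s-t|$ along each segment. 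Your route is more elementary and self-contained --- it avoids both the external reference and the second-order computation --- at the small cost of having to justify that concatenations of the two horizontal flows reach every point (the commutator ``square'' producing a vertical displacement), which you assert but do not spell out; your side remark that the Euclidean segment fails because $d_K$ scales like $|s-t|^{1/2}$ in the vertical direction is also correct and is a useful sanity check the paper does not make at this point.
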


The above theorem follows from a general principle using the \chgd{Arzel\'a}-Ascoli theorem and holds true in much more generality. 

More specifically to the Heisenberg group we will show that although the metric $d_{K}$ differs from $d_{cc}$, the \chgd{Kor\'anyi}-length $\mathcal{L}_{K}$ equals the Carnot-\chgd{Carath\'eodory} length $\mathcal{L}_{cc}$.

\begin{theorem}\label{th:lengthsame}
Let $\gamma: [0,1] \to \R^3$ be twice continuously differentiable. If $\gamma$ is horizontal (i.e. \eqref{eq:horizontal} holds) and $\mathcal{L}_{cc}(\gamma) < \infty$ then $\mathcal{L}_{K}(\gamma) < \infty$ and we have
\[
\mathcal{L}_{K}(\gamma)  = \mathcal{L}_{cc}(\gamma).
\]
\end{theorem}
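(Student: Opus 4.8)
The plan is to reduce the claimed equality of the two lengths to a single pointwise computation, namely the short-time asymptotics of $d_K(\gamma(t),\gamma(t+h))$ as $h\to 0$. Recall from \Cref{def:length} that the Kor\'anyi length is the supremum of the polygonal sums $\sum_{i=1}^n d_K(\gamma(t_{i-1}),\gamma(t_i))$ over all partitions $0=t_0<\dots<t_n=1$. Because $d_K$ obeys the triangle inequality, refining a partition can only increase such a sum, and for a continuous rectifiable curve the supremum is realized in the limit as the mesh of the partition tends to $0$. Thus, if I can establish
\[
d_K(\gamma(t),\gamma(t+h)) = |h|\,\sqrt{|\dot\gamma^1(t)|^2+|\dot\gamma^2(t)|^2}\;+\;o(|h|),
\]
with the error $o(|h|)$ uniform in $t\in[0,1]$, then the polygonal sums are uniformly close to the Riemann sums of $\int_0^1\sqrt{|\dot\gamma^1|^2+|\dot\gamma^2|^2}\,dt=\mathcal{L}_{cc}(\gamma)$, and passing to the limit as the mesh shrinks yields $\mathcal{L}_K(\gamma)=\mathcal{L}_{cc}(\gamma)$.

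The heart of the argument is a second-order Taylor expansion, which is precisely where the hypothesis $\gamma\in C^2$ is used. Writing $\gamma^i(t+h)=\gamma^i(t)+h\dot\gamma^i(t)+\tfrac{h^2}{2}\ddot\gamma^i(t)+o(h^2)$ and substituting $p=\gamma(t)$, $q=\gamma(t+h)$ into the definition of $d_K$, the first slot is immediate:
\[
\brac{|p_1-q_1|^2+|p_2-q_2|^2}^2 = h^4\brac{|\dot\gamma^1(t)|^2+|\dot\gamma^2(t)|^2}^2 + O(h^5).
\]
The delicate slot is the third, $p_3-q_3+2(p_2q_1-p_1q_2)$. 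Expanding the cross term $2(p_2q_1-p_1q_2)$ and the difference $p_3-q_3$ to order $h^2$, the coefficient of $h$ is $-\dot\gamma^3(t)+2\brac{\gamma^2(t)\dot\gamma^1(t)-\gamma^1(t)\dot\gamma^2(t)}$, which vanishes exactly by the horizontality relation \eqref{eq:horizontal}. The coefficient of $h^2$ is $-\tfrac12\ddot\gamma^3(t)+\brac{\gamma^2(t)\ddot\gamma^1(t)-\gamma^1(t)\ddot\gamma^2(t)}$, which vanishes after differentiating \eqref{eq:horizontal} once in $t$ (the terms $\dot\gamma^1\dot\gamma^2-\dot\gamma^2\dot\gamma^1$ produced by the product rule cancel). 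Consequently the entire third slot is $o(h^2)$, so its square is $o(h^4)$ and contributes nothing at leading order. Combining the two slots gives
\[
d_K(\gamma(t),\gamma(t+h))^4 = h^4\brac{|\dot\gamma^1(t)|^2+|\dot\gamma^2(t)|^2}^2 + o(h^4),
\]
and taking fourth roots produces the asymptotics of the first paragraph; equivalently, the metric speed of $\gamma$ with respect to $d_K$ is exactly $\sqrt{|\dot\gamma^1|^2+|\dot\gamma^2|^2}$, the integrand of $\mathcal{L}_{cc}$.

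I expect the main obstacle to be twofold, and in both cases it is bookkeeping rather than a conceptual difficulty. First, the cancellation in the third slot requires not only \eqref{eq:horizontal} but also its derivative, and the computation must be organized so that the $O(h)$ and $O(h^2)$ contributions are seen to cancel separately; this is the genuine reason the theorem assumes $C^2$ and not merely $C^1$, since without the second-order cancellation the third slot would be of order $h^2$, its square of order $h^4$, and it would pollute the leading term. Second, upgrading the pointwise asymptotics to the equality of lengths requires the error term to be uniform in $t$. Here I would use that $\gamma\in C^2([0,1])$ on the compact interval $[0,1]$, so that all derivatives appearing are bounded and the Taylor remainders are uniformly controlled; this legitimizes replacing the polygonal sums by Riemann sums and interchanging the limit with the integral. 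A minor point to treat separately is the case $\sqrt{|\dot\gamma^1(t)|^2+|\dot\gamma^2(t)|^2}=0$ at some $t$, where the same expansion directly gives $d_K(\gamma(t),\gamma(t+h))=o(|h|)$, consistent with the formula.
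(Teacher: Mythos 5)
Your proposal is correct and follows essentially the same route as the paper: a second-order Taylor expansion showing that horizontality kills the $O(h)$ term and its $t$-derivative kills the $O(h^2)$ term in the third slot of $d_K$, yielding the uniform metric-speed limit $d_K(\gamma(t),\gamma(t+h))/|h| \to \sqrt{|\dot\gamma^1(t)|^2+|\dot\gamma^2(t)|^2}$ (the paper's Proposition~\ref{th:derivative2} and Corollary~\ref{co:diffofgamma}), followed by the same uniform Riemann-sum comparison to identify $\mathcal{L}_K(\gamma)$ with $\int_0^1\sqrt{|\dot\gamma^1|^2+|\dot\gamma^2|^2}\,dt=\mathcal{L}_{cc}(\gamma)$. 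Both cancellations you identify check out against the paper's computation, so there is nothing to add.
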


From \Cref{th:lengthsame} we actually can conclude that $(\R^3,d_{K})$ is not a length space: By the definition of length of a curve in a metric space $(X,d)$, see \Cref{def:length}, for any $p,q$ and any curve $\gamma: [0,1] \to X$, $\gamma(0) = p$, $\gamma(1) = q$ we have the inequality
\[
 \mathcal{L}(\gamma) \geq d(p,q).
\]
If for any $p,q \in X$ there exists a curve $\gamma: [0,1] \to X$, $\gamma(0) = p$, $\gamma(1) = q$ such that we have equality 
\[
 \mathcal{L}(\gamma) = d(p,q),
\]
then we call $X$ a length space. The following example shows that $(\R^3,d_{K})$ is not a length space (this is in contrast to the Carnot-\chgd{Carath\'eodory} metric where the corresponding equality holds by definition \eqref{eq:dccpq}).

\begin{example}
The following is the shortest curve between $p:= (0,0,0)$ and $q:= (0,0,\frac{1}{4\pi})$
\[
\gamma(t) = \left ( 
\begin{array}{c}
(1-\cos(2\pi t))\\ 
\sin(2\pi t)\\
\frac{1}{4\pi} (t-\frac{\sin(2\pi t)}{2\pi})
\end{array} \right ).
\]
See \cite[Theorem 2.1]{HZ}. It can be checked by a direct computation that $\mathcal{L}_{K}(\gamma) = \mathcal{L}_{cc}(\gamma) > d_{K}(p,q)$
\end{example}

The outline of the remaining part of the paper is as follows: in \Cref{s:prelimmetric} we discuss preliminary results on metric spaces, in particular \chgd{Arzel\'a}-Ascoli's  theorem. 
In \Cref{s:horizontalcurves} we discuss properties of horizontal curves that we need for both theorems.
In \Cref{s:heisenbergcurves} we establish the existence of shortest curves with respect to $\mathcal{L}_K$ in the Heisenberg group. In \Cref{s:proofofmainthm} we prove \Cref{th:lengthsame}. Let us remark that the results in this work are probably well-known to experts, the purpose of this paper is to provide a detailed account making this exciting field accessible to non-experts, students and early career researchers.
\chgd{
\subsection*{Acknowledgment} The authors would like to thank the anonymous referee for valuable suggestions on the article.}

\section{Some preliminary Statements from Analysis: Metric Spaces}\label{s:prelimmetric}
Let $X$ be a metric space with metric $d$. A \emph{curve} $\gamma$ is simply a continuous map $\gamma: I \to X$, where $I=[a,b]$ is any closed finite interval. 

We say that a curve $\gamma: [a,b] \to X$ connects two points $p,q \in X$ if $\gamma(a) = p$ and $\gamma(b) = q$.

We now want to define the length of a curve $\gamma: [a,b] \to X$, however observe that $\gamma$ may not be differentiable. Indeed, we may not even know what differentiability of $\gamma$ means since $X$ is not a linear space! So a formula such as \eqref{eq:lengtheuclidean} does not make sense. But recall from Calculus how we obtained the formula \eqref{eq:lengtheuclidean}, we used polygonal approximation of a curve. We will do the same in metric spaces.

\begin{definition}[Partition]\label{def:partition}
Given an interval $[a,b]$, a partition of size $n$ is the set $\{x_0, x_1, \dots, x_n\}$ where
\[
a=x_0 < x_1 < \dots < x_n = b
\]
\end{definition}

With the notion of partition we can ``approximate'' curves by a \chgd{discrete path} through the points $\gamma(a),\gamma(x_1),\ldots,\gamma(b)$. Then we use the metric to define the length of these ``polygon''-lines.

\begin{definition}[Length of curve]\label{def:length}
Given a metric space $(X,d)$ and a curve $\gamma: [a,b] \to X$. The length of $\gamma$ is given by
\[
\mathcal{L}(\gamma) = \sup_{p \in P} \sum_{\chgd{i= 1}}^n d(\gamma(t_i),\gamma(t_{i-1})),
\]
where the supremum is taken over all partitions $p$ of $[a,b]$ (i.e. $P$ is the collection of all partitions of $[a,b]$).
\end{definition}
Observe that the length of a curve $\mathcal{L}(\gamma)$ is always nonnegative, indeed since $\{a,b\}$ is a partition of $[a,b]$, we have 
\begin{equation}\label{eq:lengthab}
\mathcal{L}(\gamma) \geq d(\gamma(a),\gamma(b)).
\end{equation}
In general, even if $d(\gamma(a),\gamma(b)) < \infty$ the length $\mathcal{L}(\gamma)$ could be $+\infty$. We call any curve $\gamma$ with finite length $\mathcal{L}(\gamma) < \infty$ \emph{rectifiable}.

It is worth noting the following \chgd{
\begin{lemma}\label{la:absolutecontinuity}
Given a metric space $(X,d)$, let $\gamma: [a,b] \to X$ be a curve of finite length, $\mathcal{L}(\gamma) < \infty$. Then for any $s_0 \in [a,b]$, the restricted curves
\[
 \gamma \Big |_{[s_0,b]}: [s_0,b] \to X, \quad [s_0,b] \ni t \mapsto \gamma(t)
\]
and 
\[
 \gamma \Big |_{[a,s_0]}: [a,s_0] \to X,\quad [a,s_0] \ni t \mapsto \gamma(t)
\]
are curves of finite length. Moreover 
\[
 [a,b]\ni s \mapsto \mathcal{L}\left(\gamma \Big |_{[a,s]}\right)
\]
and 
\[
 [a,b]\ni s \mapsto \mathcal{L}\left(\gamma \Big |_{[s,b]}\right)
\]
are continuous monotone increasing maps.
\end{lemma}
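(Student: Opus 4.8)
The plan is to reduce everything to a single additivity identity for length: for $a \le s_0 \le b$,
\[
\mathcal{L}(\gamma|_{[a,b]}) = \mathcal{L}(\gamma|_{[a,s_0]}) + \mathcal{L}(\gamma|_{[s_0,b]}).
\]
I would prove the two inequalities separately from \Cref{def:length}. For ``$\ge$'', given any partitions $P_1$ of $[a,s_0]$ and $P_2$ of $[s_0,b]$, their union is a partition of $[a,b]$ containing $s_0$, and the corresponding polygonal sum is exactly the sum of the two sub-sums; taking suprema over $P_1$ and $P_2$ gives the inequality. For ``$\le$'', start with an arbitrary partition $P$ of $[a,b]$ and insert the point $s_0$; by the triangle inequality this refinement does not decrease the polygonal sum, and the refined partition splits into a partition of $[a,s_0]$ and one of $[s_0,b]$, so its sum is bounded by $\mathcal{L}(\gamma|_{[a,s_0]}) + \mathcal{L}(\gamma|_{[s_0,b]})$; taking the supremum over $P$ finishes it. Finiteness of the two restrictions is then immediate, since both nonnegative summands are bounded by the finite total length.

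For monotonicity, write $L(s) := \mathcal{L}(\gamma|_{[a,s]})$. Applying additivity on $[a,s_2]$ split at $s_1 \le s_2$ gives $L(s_2) = L(s_1) + \mathcal{L}(\gamma|_{[s_1,s_2]}) \ge L(s_1)$, because lengths are nonnegative, so $L$ is increasing. The companion map satisfies $\mathcal{L}(\gamma|_{[s,b]}) = \mathcal{L}(\gamma) - L(s)$, again by additivity, hence is monotone (decreasing) in $s$; in particular continuity of one of the two maps is equivalent to continuity of the other.

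The main obstacle is continuity, i.e. showing the increment $\mathcal{L}(\gamma|_{[s_0,s]}) \to 0$ as $s \downarrow s_0$ (and the symmetric one-sided statement from the left). Here I would exploit finiteness crucially: fix $\eps>0$ and pick a near-optimal partition $s_0 = u_0 < u_1 < \dots < u_m = b$ of $[s_0,b]$ whose polygonal sum exceeds $\mathcal{L}(\gamma|_{[s_0,b]}) - \eps$. For $s \in (s_0,u_1)$, refining this partition by inserting the point $s$ and using additivity together with the triangle inequality yields the estimate
\[
\mathcal{L}(\gamma|_{[s_0,s]}) \le d(\gamma(s_0),\gamma(s)) + \eps.
\]
Since $\gamma$ is continuous, $d(\gamma(s_0),\gamma(s)) \to 0$ as $s \downarrow s_0$, so $\limsup_{s\downarrow s_0}\mathcal{L}(\gamma|_{[s_0,s]}) \le \eps$; letting $\eps \to 0$ gives right-continuity, and left-continuity follows by the mirror argument with a near-optimal partition of $[a,s_0]$. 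The subtle point to watch is that the near-optimal partition must be chosen on the side toward which the interval is growing, so that inserting the moving endpoint perturbs only a single edge of the polygon; this is exactly what lets the triangle inequality collapse the error down to $d(\gamma(s_0),\gamma(s))$.
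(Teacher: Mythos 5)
Your proof is correct, and it rests on the same core mechanism as the paper's -- a near-optimal partition cannot be refined to gain more than $\eps$ of length, so the length trapped in a shrinking subinterval is at most $\eps$ plus distances that vanish by continuity of $\gamma$ -- but your execution is a genuinely cleaner variant. First, you actually prove the additivity identity $\mathcal{L}(\gamma|_{[a,b]})=\mathcal{L}(\gamma|_{[a,s_0]})+\mathcal{L}(\gamma|_{[s_0,b]})$ (concatenate partitions for one inequality, insert $s_0$ and use the triangle inequality for the other), whereas the paper asserts the corresponding identities without proof and then uses them both for monotonicity and to reduce continuity to showing $\mathcal{L}(\gamma|_{[s_1,s_2]})\to 0$. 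Second, you anchor the near-optimal partition at $s_0$ on the side toward which the interval grows, so inserting the moving endpoint $s$ perturbs exactly one edge; combined with additivity this gives $\mathcal{L}(\gamma|_{[s_0,s]})\le d(\gamma(s_0),\gamma(s))+\eps$ with only the single continuity input $d(\gamma(s_0),\gamma(s))\to 0$. The paper instead takes a near-optimal partition of all of $[a,b]$, which forces it to introduce a mesh lower bound $\delta_2$, to do a case analysis on where $s_1,s_2$ sit relative to the partition points, to compare against an arbitrary partition of $[s_1,s_2]$, and to invoke continuity of $\gamma$ on two extra edges, ending with the weaker (but equally sufficient) bound $3\eps$. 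Both routes are valid; yours avoids essentially all of that bookkeeping. The one step you should spell out when writing this up is the derivation of your displayed estimate: writing $\Sigma$ for the polygonal sum of your chosen partition $s_0=u_0<\dots<u_m=b$, with $\Sigma>\mathcal{L}(\gamma|_{[s_0,b]})-\eps$, you have $\mathcal{L}(\gamma|_{[s,b]})\ge d(\gamma(s),\gamma(u_1))+\sum_{i=2}^m d(\gamma(u_i),\gamma(u_{i-1}))\ge \Sigma-d(\gamma(s_0),\gamma(s))$ by the triangle inequality, and subtracting this from $\mathcal{L}(\gamma|_{[s_0,b]})=\mathcal{L}(\gamma|_{[s_0,s]})+\mathcal{L}(\gamma|_{[s,b]})$ gives exactly the claimed bound.
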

\begin{proof}
Finiteness and monotonicity are easy to obtain from the definition of the curve. For the continuity, we observe that for $a \leq s_1 < s_2 \leq b$
\[
 \mathcal{L}\left(\gamma \Big |_{[a,s_2]}\right)-\mathcal{L}\left(\gamma \Big |_{[a,s_1]}\right)= \mathcal{L}\left(\gamma \Big |_{[s_1,s_2]}\right)
\]
and
\[
 \mathcal{L}\left(\gamma \Big |_{[s_1,b]}\right)-\mathcal{L}\left(\gamma \Big |_{[s_2,b]}\right)= \mathcal{L}\left(\gamma \Big |_{[s_1,s_2]}\right).
\]
So what we need to show is that for any $\eps > 0$ and any $s_1 \in [a,b]$ there exists $\delta > 0$ such that 
\[
 \mathcal{L}\left(\gamma \Big |_{[s_1,s_2]}\right) < \eps \quad \forall s_2: |s_1-s_2| < \delta.
\]
Fix $\eps > 0$ and $s_1 \in [a,b]$. By continuity of $\gamma$ we find $\delta_1  > 0$ such that 
\begin{equation}\label{eq:continuityasldkjasd}
 d(\gamma(\tilde{s}),\gamma(\tilde{t})) < \eps \quad \forall |\tilde{s}-s_1|,\ |\tilde{t}-s_1| < \delta_1.
\end{equation}
Since $\mathcal{L}(\gamma) < \infty$ there exists a partition 
\[
 a = t_0 < t_1 \ldots < t_n = b
\]
such that 
\[
 \mathcal{L}(\gamma) - \eps \leq \sum_{i = 1}^n d(\gamma(t_i),\gamma(t_{i-1})).
\]
Set 
\[
 \delta_2 := \inf_{i=1,\ldots,n} |t_i-t_{i-1}|.
\]
Set $\delta := \min\{\delta_1,\delta_2\}$ and fix any $s_2 \in [a,b]$ with $|s_1-s_2|<\frac{\delta}{2}$. 

W.l.o.g. $s_1 < s_2$. We then may assume that $t_{i_0-1}<s_1 < t_{i_0} < s_2<t_{i_0+1}$ for some $i_0 \in \N$ (all other cases follow by an easy adaptation). We now consider the new partition $\tilde{t}_i$,
\[
 \tilde{t_i} = \begin{cases}
                t_i \quad &i \leq i_0-1\\
                s_1 \quad &i=i_0\\
                t_{i_0} \quad &i=i_0+1\\
                s_2 \quad &i=i_0 + 2\\
                t_{i-2} \quad &i \geq i_0+3.
               \end{cases}
\]
Then, by triangular inequality,
\begin{equation}\label{eq:lengthmeps}
 \mathcal{L}(\gamma) - \eps \leq \sum_{i = 1}^{n+2} d(\gamma(\tilde{t}_i),\gamma(\tilde{t}_{i-1})).
\end{equation}
Now let $s_1 = r_0 < r_1 < \ldots = r_m = s_2$ be any partition of $[s_1,s_2]$.
Then 
\[
\begin{split}
 \sum_{j = 0}^{m} d(\gamma(r_j),\gamma(r_{j-1})) =&  \sum_{i \neq i_0+1,i_0+2} d(\gamma(\tilde{t}_i),\gamma(\chgdtwo{\tilde{t}_{i-1}}))+  \sum_{j = 0}^{m} d(\gamma(r_j),\gamma(r_{j-1})) \\
 &- \sum_{i =1}^{n+2} d(\gamma(\tilde{t}_i),\gamma(\chgdtwo{\tilde{t}_{i-1}}))\\
 &+d(\gamma(\tilde{t}_{i_0+1}),\gamma(\tilde{t}_{i_0}))+d(\gamma(\tilde{t}_{i_0+2}),\gamma(\tilde{t}_{i_0+1}))
 \end{split}
\]
Since we can combine the partitions $\tilde{t}_i$, $i \neq i_0+1,i_0+2$ with $r_j$ to obtain a partition of $[a,b]$, we have by the definition of length,
\[
\sum_{i \neq i_0+1,i_0+2} d(\gamma(\tilde{t}_i),\gamma(\chgdtwo{\tilde{t}_{i-1}}))+  \sum_{j = 0}^{m} d(\gamma(r_j),\gamma(r_{j-1})) \leq \mathcal{L}(\gamma).
\]
By \eqref{eq:lengthmeps} we have 
\[
 - \sum_{i =1}^{n+2} d(\gamma(\tilde{t}_i),\gamma(\chgdtwo{\tilde{t}_{i-1}})) \leq -\mathcal{L}(\gamma) + \eps.
\]
By \eqref{eq:continuityasldkjasd} which we can apply since $s_1 < t_{i_0} < s_2$ and thus $|s_1 -s_2|$, $|t_{i_0}-s_2| < \delta_1$,
\[
 d(\gamma(\tilde{t}_{i_0+1}),\gamma(\tilde{t}_{i_0}))+d(\gamma(\tilde{t}_{i_0+2}),\gamma(\tilde{t}_{i_0+1}))
 =d(\gamma(t_{i_0}),\gamma(s_1))+d(\gamma(s_2),\gamma(t_{i_0})) \leq 2\eps.
\]
So we have shown
\[
\begin{split}
 \sum_{j = 0}^{m} d(\gamma(r_j),\gamma(r_{j-1})) \leq 3\eps.
 \end{split}
\]
This holds for any partition $(r_j)$ of $[s_1,s_2]$ and thus 
\[
 \mathcal{L}\left(\gamma \Big |_{[s_1,s_2]}\right) < \chgdtwo{3}\eps.
\]
\chgdtwo{Since $\eps$ was arbitrary,} we can conclude.
\end{proof}
}

For simplicity, we will often restrict our attention to curves defined on $I = [0,1]$, which we can do without loss of generality. Indeed any curve 
\[
\gamma: [a,b] \to X
\]
can be reparametrized to a curve 
\[
\tilde{\gamma}: [0,1] \to X
\]
by simply setting
\[
\tilde{\gamma}(t) := \gamma(tb + (1-t) a).
\]
Similarly any curve $\gamma: [0,1] \to X$ can be reparametrized to a curve $\tilde{\gamma}: [a,b] \to X$. The length of the curve $\gamma$ and $\tilde{\gamma}$ above are the same, $\mathcal{L}(\gamma)= \mathcal{L}(\tilde{\gamma})$. Indeed, the length of curves is invariant under reparametrization.

\chgd{
\begin{definition}[Reparametrization]
Let $\gamma: [a,b] \to X$ be a curve. Let $\tau: [c,d] \to [a,b]$ be a continuous bijection with continuous inverse (i.e. a homeomorphism) such that $\tau(c) = a$ and $\tau(d) = b$. Then, $\tau$ is a reparametrization of $\gamma$.
\end{definition}

\begin{lemma}\label{la:reparamlength}
Let $\gamma: [a,b] \to X$ be a curve and $\tau: [c,d] \to [a,b]$ be a reparametrization.  Then if we set $\tilde{\gamma}(t) := \gamma(\tau(t))$ we get that $\tilde{\gamma}: [c,d] \to X$ is a curve and 
\[
\mathcal{L}(\gamma) = \mathcal{L}(\tilde{\gamma})
\]
\end{lemma}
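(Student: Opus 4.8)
The plan is to show that reparametrization sets up a bijection between the partitions of $[a,b]$ and the partitions of $[c,d]$ that preserves the polygonal sums in \Cref{def:length}, so that the two suprema agree. First I would observe that $\tilde\gamma = \gamma \circ \tau$ is continuous as a composition of continuous maps, so it is indeed a curve. The key structural fact is that since $\tau$ is a homeomorphism of intervals fixing the endpoints, it is strictly monotone; combined with $\tau(c) = a$ and $\tau(d) = b$, this forces $\tau$ to be strictly increasing. Consequently $\tau$ carries any partition $c = s_0 < s_1 < \dots < s_n = d$ of $[c,d]$ to a partition $a = \tau(s_0) < \tau(s_1) < \dots < \tau(s_n) = b$ of $[a,b]$, and because $\tau^{-1}$ is also a strictly increasing homeomorphism fixing endpoints, this correspondence is a bijection between the partition sets $P_{[c,d]}$ and $P_{[a,b]}$.

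Next I would compute the polygonal sum for $\tilde\gamma$ over a partition $\{s_i\}$ of $[c,d]$ and match it term-by-term with the polygonal sum for $\gamma$ over the image partition $\{\tau(s_i)\}$ of $[a,b]$. Writing $t_i := \tau(s_i)$, we have $\tilde\gamma(s_i) = \gamma(\tau(s_i)) = \gamma(t_i)$, so
\[
\sum_{i=1}^n d\bigl(\tilde\gamma(s_i),\tilde\gamma(s_{i-1})\bigr) = \sum_{i=1}^n d\bigl(\gamma(t_i),\gamma(t_{i-1})\bigr).
\]
Thus every polygonal sum for $\tilde\gamma$ equals a polygonal sum for $\gamma$, and conversely via $\tau^{-1}$ every polygonal sum for $\gamma$ arises this way. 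Taking the supremum over all partitions on each side, the bijection guarantees the two supremum sets of real numbers are identical, hence $\mathcal{L}(\tilde\gamma) = \mathcal{L}(\gamma)$.

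The only genuinely non-trivial step is establishing that the homeomorphism $\tau$ is strictly monotone, and in fact strictly increasing given the endpoint conditions. A continuous bijection between intervals is automatically strictly monotone (a standard consequence of the intermediate value theorem: if it failed monotonicity it would fail injectivity), and the boundary values $\tau(c)=a$, $\tau(d)=b$ with $a<b$ rule out the decreasing case. I would state this as the crux of the argument, since everything afterward is the purely formal bijection-of-partitions bookkeeping described above. It is worth noting that the argument uses only that $\tau$ is an order-preserving bijection of the parameter intervals and makes no use of any linear structure on $X$, which is exactly what one wants for a general metric space.
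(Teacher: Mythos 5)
Your proposal is correct and follows exactly the route the paper sketches (the proof is left as an exercise there, with the hint that $\tau$ and $\tau^{-1}$ exchange partitions of $[c,d]$ and $[a,b]$): you establish strict monotonicity of $\tau$, match polygonal sums term by term, and conclude the suprema coincide. Nothing is missing.
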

}
We leave the proof as an exercise, but observe that $\tau$ maps any partition for $[c,d]$ into a partition of $[a,b]$, and $\tau^{-1}$ maps any partition of $[a,b]$ into a partition of $[c,d]$.

Now we want to find geodesics, i.e. shortest curves between two points $p$ and $q$ in $X$. A curve $\gamma: I \to X$ is called \chgd{the} \emph{shortest curve} or (minimizing) geodesic from $p$ to $q$ if it connects $p$ and $q$ and for any other curve $\tilde{\gamma} : \tilde{I} \to X$ which connects $p$ and $q$ we have we have $\mathcal{L}(\gamma) \leq \mathcal{L}(\tilde{\gamma}).$ 

In general metric spaces $X$ there is no reason that there exists such a shortest curve $\gamma$. As a side-note a shortest curve in general is not unique\chgd{:} think of the many shortest curves connecting the north pole and the south pole of a sphere. In order to conduct in the following chapters our analysis of the Heisenberg group, we conclude this section with a few important notions and facts on maps (possibly) on metric spaces.

The first result from Analysis is the \chgd{Arzel\'a}-Ascoli theorem -- the proof can be found in essentially all Advanced Calculus books.
Recall that a set $E \subset X$ is compact, if any sequence $(x_n)_{n \in \N} \subset E$ has a subsequence $(x_{n_i})_{i \in \N}$ and a point $x \in E$ such that $d(x_{n_i},x) \xrightarrow{i \to \infty} 0$.

\begin{theorem}[\chgd{Arzel\'a}-Ascoli]\label{th:arzelaascoli}
Let $(X,d)$ be a metric space and \chgd{$E \subset X$ be compact.} Assume there is a sequence of maps $\gamma_k: [0,1] \to E$ which are 
\emph{equicontinuous}, i.e. for any $\eps > 0$ there exists $\delta > 0$ such that $\sup_{k \in \N} d(\gamma_k(t),\gamma_k(s)) < \varepsilon$ for all $s,t \in [0,1]$ with $|s-t| < \delta$.

Then, there exists a subsequence $(\gamma_{k_i})_{i \in \N}$ and a continuous limit function $\gamma: [0,1] \to X$ such that $\gamma_{k_i}$ uniformly converge to $\gamma$ in the sense that 
\[
\sup_{t \in [0,1]} d(\gamma_{k_i}(t),\gamma(t)) \xrightarrow{i \to \infty} 0.
\]
\end{theorem}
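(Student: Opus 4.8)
The plan is to prove this via the classical diagonal-subsequence argument: I will extract convergence first on a countable dense subset of $[0,1]$, and then upgrade this to uniform convergence on all of $[0,1]$ using the equicontinuity hypothesis together with the compactness of $E$.

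First I would fix an enumeration $q_1, q_2, q_3, \ldots$ of the rationals in $[0,1]$, which form a countable dense set. Since $E$ is compact, the sequence $(\gamma_k(q_1))_{k \in \N} \subset E$ admits a convergent subsequence, selecting a first subsequence of indices. Applying the same reasoning successively at $q_2, q_3, \ldots$ and passing to nested index subsequences, I would then form the diagonal sequence of indices $(k_i)_{i \in \N}$ with the property that $(\gamma_{k_i}(q_n))_{i \in \N}$ converges in $E$ for every $n$. This yields pointwise convergence of the $\gamma_{k_i}$ along the dense rational set. The diagonal extraction is routine but should be phrased carefully, ensuring that $(k_i)_{i \ge m}$ is a subsequence of the $m$-th nested index sequence for each $m$, so that convergence at $q_m$ is preserved.

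Next I would show that $(\gamma_{k_i})_i$ is uniformly Cauchy. Given $\eps > 0$, equicontinuity furnishes a $\delta > 0$; I cover $[0,1]$ by finitely many subintervals of length $< \delta$ and choose one rational $q$ in each. Because there are only finitely many such distinguished rationals and $\gamma_{k_i}$ converges at each of them, for $i, j$ large the quantities $d(\gamma_{k_i}(q),\gamma_{k_j}(q))$ are simultaneously small at all of them. For an arbitrary $t \in [0,1]$ I pick the distinguished rational $q$ lying in the same subinterval and estimate
\[
d(\gamma_{k_i}(t),\gamma_{k_j}(t)) \le d(\gamma_{k_i}(t),\gamma_{k_i}(q)) + d(\gamma_{k_i}(q),\gamma_{k_j}(q)) + d(\gamma_{k_j}(q),\gamma_{k_j}(t)),
\]
where the two outer terms are $< \eps$ by equicontinuity (since $|t-q| < \delta$) and the middle term is $< \eps$ by the pointwise convergence, for $i,j$ large. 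Since $E$ is compact it is complete, so for each fixed $t$ the Cauchy sequence $(\gamma_{k_i}(t))_i$ converges to a limit $\gamma(t) \in E \subset X$. Letting $j \to \infty$ in the displayed estimate promotes the uniform Cauchy bound to $\sup_{t \in [0,1]} d(\gamma_{k_i}(t),\gamma(t)) \le \eps$ for all large $i$, which is precisely the asserted uniform convergence. Continuity of $\gamma$ then follows from one further triangle-inequality argument combining equicontinuity of the $\gamma_{k_i}$ with this uniform convergence, i.e. the uniform limit of an equicontinuous family is continuous.

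The main obstacle is the passage from convergence on the dense rational set to genuine uniform convergence on all of $[0,1]$: this is exactly where equicontinuity is indispensable, and it must be combined with the finiteness of the covering (so that only finitely many rationals need to be controlled at once) and with the completeness of the compact set $E$ (so that the pointwise limits actually exist). Everything else, including the diagonal selection and the final continuity check, is bookkeeping built on these two ingredients.
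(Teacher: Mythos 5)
Your proof is correct: the diagonal extraction over the rationals, the upgrade to a uniform Cauchy estimate via equicontinuity plus a finite $\delta$-cover, and the use of completeness of the compact set $E$ to produce the limit are exactly the standard Arzel\'a--Ascoli argument, and each step is justified. Note that the paper itself does not prove this theorem --- it explicitly defers to standard Advanced Calculus texts --- so there is no in-paper argument to compare against; your write-up supplies the classical proof that those references contain, and it is compatible with the paper's (sequential) definition of compactness.
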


We will use later that uniform Lipschitz continuity implies equicontinuity. Namely if there exists $\Lambda > 0$ such that 
\[
\sup_{k \in \N} d(\gamma_{k}(s),\gamma_{k}(t)) \leq \Lambda |s-t| \quad \text{for all $s,t \in [0,1]$}
\]
then the equicontinuity condition in Theorem~\ref{th:arzelaascoli} is satisfied.

We now show that any curve with finite length can be parametrized so that it is Lipschitz continuous (so curves with uniformly bounded length are uniformly Lipschitz continuous, and thus equicontinuous).

\begin{proposition}[Monotone Reparametrization]\label{pr:reparam}
Let $\gamma: [a,b] \to X$ be a curve of finite length,  $\mathcal{L}(\gamma) < \infty$. 

Then $\gamma$ admits a Lipschitz reparameterization in the following sense. 

There exists $\tilde{\gamma}: [0,1] \to X$ with the following properties \begin{itemize} \item $\tilde{\gamma}(0) = \gamma(a)$ and $\tilde{\gamma}(1) = \gamma(b)$\item $\tilde{\gamma}([0,1]) = \gamma([a,b])$ (in the sense of sets in $X$) \item $\mathcal{L}(\gamma) =\mathcal{L}(\tilde{\gamma})$,
\item $|\tilde{\gamma}(s)-\tilde{\gamma}(t)| \leq \mathcal{L}(\gamma) |s-t| \quad \forall s,t \in [0,1].$
\end{itemize}
\end{proposition}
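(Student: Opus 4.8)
The plan is to parametrize $\gamma$ by normalized arc length. Set $L := \mathcal{L}(\gamma)$ and define the arc-length function
\[
\sigma: [a,b] \to [0,L], \qquad \sigma(s) := \mathcal{L}\left(\gamma\Big|_{[a,s]}\right).
\]
By \Cref{la:absolutecontinuity} this $\sigma$ is continuous, monotone increasing, with $\sigma(a) = 0$ and $\sigma(b) = L$. If $L = 0$ then $\gamma$ is constant and I simply take $\tilde\gamma$ to be the constant curve with value $\gamma(a) = \gamma(b)$, for which all four properties are trivial; so assume $L > 0$ and set $\phi := \sigma/L : [a,b] \to [0,1]$, which is continuous, monotone increasing and \emph{surjective} (by the intermediate value theorem), with $\phi(a) = 0$ and $\phi(b) = 1$.

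I then want to define $\tilde\gamma := \gamma \circ \phi^{-1}$, i.e. $\tilde\gamma(u) := \gamma(s)$ for any $s$ with $\phi(s) = u$. Since $\phi$ need not be strictly increasing, this requires a well-definedness argument, which is the heart of the proof. Suppose $\phi(s_1) = \phi(s_2)$ with $s_1 < s_2$. Then $\mathcal{L}(\gamma|_{[s_1,s_2]}) = \sigma(s_2) - \sigma(s_1) = 0$, so by \eqref{eq:lengthab} applied to the restriction we get $d(\gamma(s_1), \gamma(s_2)) \leq \mathcal{L}(\gamma|_{[s_1,s_2]}) = 0$, whence $\gamma(s_1) = \gamma(s_2)$. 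Thus $\gamma$ is constant on every level set of $\phi$, and since $\phi$ is onto, $\tilde\gamma$ is well defined on all of $[0,1]$. The boundary and image properties follow immediately: $\phi(a) = 0$ forces $\tilde\gamma(0) = \gamma(a)$ and $\phi(b) = 1$ forces $\tilde\gamma(1) = \gamma(b)$, while $\tilde\gamma([0,1]) = \gamma([a,b])$ because $\phi$ is surjective and $\tilde\gamma(\phi(s)) = \gamma(s)$ for every $s$.

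For the Lipschitz estimate I take $u_1 = \phi(s_1) \leq u_2 = \phi(s_2)$ with $s_1 \leq s_2$ and use \eqref{eq:lengthab} on the restriction together with \Cref{la:absolutecontinuity}:
\[
d(\tilde\gamma(u_1), \tilde\gamma(u_2)) = d(\gamma(s_1), \gamma(s_2)) \leq \mathcal{L}\left(\gamma\Big|_{[s_1,s_2]}\right) = \sigma(s_2) - \sigma(s_1) = L\,(u_2 - u_1),
\]
which is exactly the claimed bound $d(\tilde\gamma(u_1),\tilde\gamma(u_2)) \leq \mathcal{L}(\gamma)\,|u_1-u_2|$, and in particular shows $\tilde\gamma$ is continuous.

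Finally, to match lengths I would prove the two inequalities separately. Summing the Lipschitz bound over any partition of $[0,1]$ gives $\mathcal{L}(\tilde\gamma) \leq L\cdot(1-0) = L = \mathcal{L}(\gamma)$. For the reverse, given a partition $a = t_0 < \dots < t_n = b$, the values $\phi(t_0) \leq \dots \leq \phi(t_n)$ form a weakly increasing list in $[0,1]$; discarding repetitions (which contribute nothing to the polygonal sum, as $\gamma$ is constant on plateaus of $\phi$) yields a genuine partition of $[0,1]$ realizing the same sum, so $\sum_i d(\gamma(t_i),\gamma(t_{i-1})) \leq \mathcal{L}(\tilde\gamma)$, and taking the supremum gives $\mathcal{L}(\gamma) \leq \mathcal{L}(\tilde\gamma)$. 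Combining the two yields $\mathcal{L}(\tilde\gamma) = \mathcal{L}(\gamma)$. The only genuinely delicate step is the well-definedness of $\tilde\gamma = \gamma\circ\phi^{-1}$ across the intervals where $\phi$ is constant; everything else is bookkeeping resting on \Cref{la:absolutecontinuity} and \eqref{eq:lengthab}.
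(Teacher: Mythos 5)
Your proof is correct and follows essentially the same route as the paper: reparametrize by the arc-length function $s \mapsto \mathcal{L}(\gamma|_{[a,s]})$, justify well-definedness of $\gamma\circ\phi^{-1}$ by showing $\gamma$ is constant on the plateaus of $\phi$, and prove the length equality by pushing partitions back and forth through $\phi$. The only (welcome) streamlining is that you normalize by $L$ up front and deduce continuity of $\tilde\gamma$ directly from the Lipschitz estimate, whereas the paper proves continuity of $\hat\gamma$ by a separate $\delta_1,\delta_2$ argument and rescales at the end; you also handle the degenerate case $L=0$ explicitly, which the paper glosses over.
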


%\begin{myindentpar}{1cm}

\begin{proof}
Without loss of generality, $[a,b] = [0,1]$. 
Let $\gamma : [0,1] \to (X,d)$ be a curve of finite length. 

Define $\tau(t) := \mathcal{L}(\gamma |_{[0,t]}): [0,1] \to [0,\mathcal{L}(\gamma)]$, which by \Cref{la:absolutecontinuity} is continuous and monotone increasing.

\chgd{We would like to set $\hat{\gamma} := \gamma \circ \tau^{-1}: [0,\mathcal{L}(\gamma)] \to X$. The issue is that $\tau$ may not be strictly monotone, so $\tau$ may not be invertible.

However $\hat{\gamma}$ is still well-defined. Observe that if for some $0 \leq r\leq \tilde{r} \leq 1$ we have $\tau(r)=\tau(\tilde{r})$, then 
\[
 0 = \mathcal{L}(\gamma |_{[0,\tilde{r}]})-\mathcal{L}(\gamma |_{[0,r]})=\mathcal{L}(\gamma |_{[r,\tilde{r}]}),
\]
that is $\mathcal{L}(\gamma |_{[r,\tilde{r}]})=0$ and from the definition of the length $\mathcal{L}$ we conclude that $d(\gamma(s),\gamma(t)) = 0$ for all $s,t \in [r,\tilde{r}]$. 

That is $\tau(r)=\tau(\tilde{r})$ implies that $\gamma$ is constant on $[\tilde{r},r]$, in particular $\gamma(r) = \gamma(\tilde{r})$.

So we can still define $\hat{\gamma} := \gamma \circ \tau^{-1}$ in the following sense: for a given $t \in [0,\mathcal{L}(\gamma)]$ take any $r \in [0,1]$ such that $\tau(r) = t$. Such a $r$ exists by the intermediate value theorem since $\tau$ is continuous, $\tau(0) = 0$ and $\tau(1) = \mathcal{L}(\gamma)$. Then we set
\[
 \hat{\gamma}(t) := \gamma(r).
\]
If we were to pick any other $\tilde{r}$ with $\tau(\tilde{r}) = t$ then by the above observation we have $\gamma(r) = \gamma(\tilde{r})$ and $\hat{\gamma}(t)$ still has the same value.

We now claim that $\hat{\gamma}$ is continuous. Fix $t_0 \in [0,\mathcal{L}(\gamma)]$ and $\eps > 0$. Take $R \subset [0,1]$ such that $\tau(r) = t_0$ for all $r \in R$. By the above observation, whenever $r,\tilde{r} \in R$ we have $[r,\tilde{r}] \subset R$. On the other hand if $(r_k)_{k \in \N} \subset [0,1]$ such that $\tau(r_k) = t_0$ for all $k \in \N$ then if $r = \lim_{k \to \infty} r_k$ we have $\tau(r) = t_0$, by continuity of $\tau$. Combining this with monotonicity of $\tau$ we find that for some $r_0 \leq r_1$
\[
 R = [r_0,r_1], \quad \text{and} \quad \tau(r) < t_0 \quad\text{if $r < r_0$}, \quad \text{and} \quad \tau(r) > t_0 \quad \text{if $r > r_1$}.
\]
By continuity of $\gamma$, there exists an $\delta_1 > 0$ such that $|\gamma(r)-\hat{\gamma}(t_0)|<\eps$ whenever $r \in (r_0-\delta_1,r_1+\delta_1)$. Let now $\delta_2 := \min\{\tau(r_0)-\tau(r_0-\delta_1),\tau(r_1+\delta_1)-\tau(r_0)) > 0$. 
Recall that $t_0 = \tau(r_0) = \tau(r_1)$. So whenever $t$ satisfies $|t-t_0| < \delta_2$ then we have $t \in (\tau(r_0-\delta_1),\tau(r_1+\delta_1))$, and thus by monotonicity, $t \chgdtwo{\in \tau}(r_0-\delta_1,r_1+\delta_1)$ which implies that $|\hat{\gamma}(t) -\hat{\gamma}(t_0)| < \eps$. That is, we have shown continuity of $\hat{\gamma}$. }

\chgd{With the same observation as above, it is now not too difficult to show that $\mathcal{L}(\gamma) =\mathcal{L}(\hat{\gamma})$ -- since the only points where $\tau$ is not invertible are points where no length is added.}
\chgdtwo{Indeed, let $0=r_0 < r_1 < \ldots < r_n =1$ be a partition of $[0,1]$. 
Set $t_0=0$ and $t_n = \mathcal{L}(\gamma)$ and set $t_i := \tau(r_i)$ for $i=1,\ldots,n-1$. Then $\hat{\gamma}(t_i) = \gamma(r_i)$. By monotonicity of $\tau$ we have $0 =t_1 \leq t_2 \leq \ldots \leq t_n = \mathcal{L}(\gamma)$. It might happen that we have equality $t_{i} = t_{i-1}$ but then $\tau(r_{i}) = \tau(r_{i-1})$ which by the argument above means $\hat{\gamma}(t_{i}) = \hat{\gamma}(t_{i-1})$ and thus $d(\hat{\gamma}(t_i),\hat{\gamma}(t_{i-1}))=0$. Consequently we have 
\[
\sum_{i=1}^n d(\gamma(r_i),\gamma(r_{i-1})) = \sum_{i=1}^n d(\hat{\gamma}(t_i),\hat{\gamma}(t_{i-1})) \leq \mathcal{L}(\hat{\gamma}).
\]
Taking the supremum of all partitions of $[0,1]$ we have 
\begin{equation}\label{eq:lsdfjlksdfj1}
\mathcal{L}(\gamma) \leq \mathcal{L}(\hat{\gamma}).
\end{equation}
For the other direction let $0 = t_0 < t_1 < \ldots < t_n= \mathcal{L}(\gamma)$ be any partition of
%(we can chat in the chat!)  
$[0,\mathcal{L}(\gamma)]$. 
We now create a new partition $0 = r_0 < \ldots < r_i < \ldots < r_n = 1$ such that $\tau(r_i) = t_i$, and thus by the definition of $\hat{\gamma}$, $\gamma(r_i) = \hat{\gamma}(t_i)$. We set $r_0 := 0$ and $r_n :=1$. We define $r_i$ to be any $r_i \in (0,1)$ such that $\tau(r_i) = t_i$, this choice of $r_i$ may not be unique but from the intermediate value theorem at least one such $r_i$ must exists.  Since $t_{i-1} < t_i$ for all $i$, from the monotonicity of $\tau$ we conclude that $r_{i-1} < r_i$ for all $i$, and thus $0=r_0 < r_1 < \ldots < r_n = 1$ is the desired new partition of $[0,1]$. We then have
\[
\sum_{i=1}^n d(\hat{\gamma}(t_i),\hat{\gamma}(t_{i-1})) = \sum_{i=1}^n d(\gamma(r_i),\gamma(r_{i-1})) \leq \mathcal{L}(\gamma).
\]
Taking the supremum over all partitions of  $[0,\mathcal{L}(\gamma)]$ we conclude 
\begin{equation}\label{eq:lsdfjlksdfj2}
\mathcal{L}(\hat{\gamma}) \leq \mathcal{L}(\gamma).
\end{equation}
Together, \eqref{eq:lsdfjlksdfj1} and \eqref{eq:lsdfjlksdfj2} imply 
\[
\mathcal{L}(\hat{\gamma}) = \mathcal{L}(\gamma).
\]

}
Next, we observe that the definition of the length of a curve implies 
$$d(\gamma(t) , \gamma(s)) \overset{\eqref{eq:lengthab}}{\leq} \mathcal{L}\left(\gamma \ |_{[s,t]}\right) = \left |\mathcal{L}\left(\gamma|_{[0,t]}\right) - \mathcal{L}\left(\gamma|_{[0,s]}\right)\right |=|\tau(t)-\tau(s)|.$$
%(first inequality: definition of length as supremum over partitions; second inequality: supremum over sample points, refiniement property)
Let $\hat{s}, \hat{t} \in [0,\mathcal{L}(\gamma)]$ and take any $s,t \in [0,1]$ such that $\tau(s) = \hat{s}$, $\tau(t) = \hat{t}$. Then 
$$d(\hat{\gamma}(\hat{t}), \hat{\gamma}(\hat{s})) d(\gamma(t), \gamma(s)) \leq |\tau(t)-\tau(s)| = |\hat{t}-\hat{s}|.$$

Thus, $\hat{\gamma}$ is Lipschitz continuous, albeit with the wrong constant, which is easy to fix.

Set
\[
\tilde{\gamma}(s) := \hat{\gamma}(\mathcal{L}(\gamma)s), \quad s \in [0,1].
\]
Then we have 
\[
d(\tilde{\gamma}(s),\tilde{\gamma}(t)) \leq \mathcal{L}(\gamma)|s-t| \quad \forall s,t \in [0,1].
\]
\end{proof}

The \chgd{Arzel\'a}-Ascoli theorem, \Cref{th:arzelaascoli}, will play a crucial role in constructing a \emph{candidate} for a shortest curve in the Heisenberg group. Another important ingredient is the following lower semicontinuity of the length.

\begin{proposition}[Lower semicontinuity of the length functional]\label{la:llowersem}
Let $(X,d)$ be a metric space, and $\{\gamma_n\}_{n \in \mathbb{N}}$ be a sequence of curves into $X$. If $\gamma_n$ converges pointwise to a curve, $\gamma$, in $X$, then
\[
\mathcal{L}(\gamma) \leq \liminf_{n \to \infty} \mathcal{L}(\gamma_n)
\]
\end{proposition}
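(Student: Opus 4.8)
The plan is to exploit the fact that $\mathcal{L}$ is defined as a supremum over all partitions, and that for each \emph{fixed} partition the corresponding polygonal sum depends on the curve only through its values at the finitely many partition points. Pointwise convergence is therefore exactly the hypothesis needed to pass to the limit in each such sum, and lower semicontinuity will fall out by exchanging the order of a supremum and a $\liminf$.

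First I would fix an arbitrary partition $a = t_0 < t_1 < \dots < t_m = b$ of the common domain $[a,b]$. By the pointwise convergence hypothesis, $\gamma_n(t_i) \to \gamma(t_i)$ in $(X,d)$ for each of the finitely many indices $i = 0, \dots, m$. The metric $d$ is jointly continuous: from the triangle inequality one has $|d(x_n,y_n) - d(x,y)| \leq d(x_n,x) + d(y_n,y)$, so $x_n \to x$ and $y_n \to y$ force $d(x_n,y_n) \to d(x,y)$. Applying this to each summand and using that the sum is \emph{finite}, I obtain
\[
\sum_{i=1}^{m} d(\gamma(t_i),\gamma(t_{i-1})) = \lim_{n\to\infty} \sum_{i=1}^{m} d(\gamma_n(t_i),\gamma_n(t_{i-1})).
\]

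Next, for every $n$ the definition of length as a supremum over partitions gives the pointwise bound $\sum_{i=1}^{m} d(\gamma_n(t_i),\gamma_n(t_{i-1})) \leq \mathcal{L}(\gamma_n)$. Taking $\liminf_{n\to\infty}$ on the right-hand side and combining with the displayed identity (a genuine limit, hence equal to its own $\liminf$) yields
\[
\sum_{i=1}^{m} d(\gamma(t_i),\gamma(t_{i-1})) \leq \liminf_{n\to\infty} \mathcal{L}(\gamma_n).
\]
Since this inequality holds for \emph{every} partition of $[a,b]$ while its right-hand side does not depend on the partition, I would finish by taking the supremum of the left-hand side over all partitions, which is precisely $\mathcal{L}(\gamma)$, to conclude $\mathcal{L}(\gamma) \leq \liminf_{n\to\infty}\mathcal{L}(\gamma_n)$.

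There is essentially no serious obstacle here, and in particular no need for uniform convergence: the whole point is that each polygonal approximant only ``sees'' finitely many points of the curve, so pointwise convergence suffices to control it, and the estimate can only be lost, never gained, when passing to the supremum, which is exactly why one obtains a one-sided inequality. The only points requiring care are the continuity of the metric noted above and keeping the two limiting operations straight --- a plain limit inside the fixed, finite polygonal sum versus the $\liminf$ over the sequence of full lengths. If the curves $\gamma_n$ were a priori defined on different intervals, one would first reparametrize them all to $[0,1]$ using \Cref{la:reparamlength} (which leaves lengths unchanged), so that ``pointwise convergence'' refers to a common domain.
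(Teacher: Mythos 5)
Your proof is correct and follows essentially the same route as the paper: both arguments rest on the observation that a fixed partition's polygonal sum depends only on finitely many points, so pointwise convergence (plus the triangle-inequality continuity of $d$) lets you pass to the limit term by term before taking the supremum over partitions. The paper packages this as an $\eps/2$-splitting with a near-optimal partition, while you take the limit in the fixed sum directly and then exchange the supremum with the $\liminf$; your phrasing is a touch cleaner (and handles the case $\mathcal{L}(\gamma)=\infty$ without extra comment), but the underlying idea is identical.
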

\begin{proof}
As discussed above, without loss of generality we can assume that all curves $\gamma_n: [0,1] \to X$.

Let $\eps > 0$ be arbitrary. Since
\[
\mathcal{L}(\gamma) = \sup_{p \in P} \sum_{\chgd{i \geq 1}} (d(\gamma(t_i),\gamma(t_{i-1}))
\]
where $P$ is the set of partitions of $[0,1]$, we can find a specific partition, $\mu=(t_0,t_1,\ldots,t_m)$, such that 

\begin{align*}
\mathcal{L}(\gamma)& < \left(\sum_{\chgd{t_i \in \mu, i \geq 1}} d(\gamma(t_i), \gamma(t_{i-1}))\right) + \frac{\eps}{2}.
\end{align*}

By pointwise convergence $\gamma_n(t) \xrightarrow{n\to \infty}  \gamma(t)$ for each fixed $t$, we can find $N \in \N$ such that 
\[
d(\gamma_n({t_i}), \gamma(t_i)) < \frac{\eps}{4m} \quad \forall i = 0,\ldots,m, \quad \forall n \geq N.
\]
Then, 
\begin{align*}
    d(\gamma(t_{i}),\gamma(t_{i-1}) &\leq d(\gamma(t_{i}),\gamma_n(t_{i})) + d(\gamma_n(t_{i}),\gamma_n(t_{i-1})) + d(\gamma_n(t_{i-1}),\gamma(t_{i-1}))\\
    &< \frac{\eps}{4m}  + d(\gamma_n(t_{i}),\gamma_n(t_{i-1})) + \frac{\eps}{4m}\\
    &= d(\gamma_n(t_{i}),\gamma_n(t_{i-1})) + \frac{\eps}{2m}.
\end{align*}
Thus,

\begin{align*}
\mathcal{L}(\gamma)& < \left(\sum_{\chgd{t_i \in \mu, i \geq 1}} d(\gamma_n(t_i), \gamma_n(t_{i-1})) \right)+ \frac{\eps}{2}+\frac{\eps}{2}.
\end{align*}

Finally, since 
\[
\mathcal{L}(\gamma_n) = \sup_{\mu \in \mathcal{P}} \sum_{\chgd{t_i \in \mu, i \geq 1}} (d(\gamma_n(t_i),\gamma_n(t_{i-1}))
\]
we have 
\[
\sum_{\chgd{t_i \in \mu, i \geq 1}} d(\gamma_n(t_i), \gamma_n(t_{i-1})) \leq \mathcal{L}(\gamma_n) .
\]

Thus we have shown,
\[
\mathcal{L}(\gamma) < \mathcal{L}(\gamma_n) + \eps, \quad \forall n \geq N.
\]
In particular 
\[
\mathcal{L}(\gamma) < \liminf_{n \to \infty} \mathcal{L}(\gamma_n) + \eps.
\]
This holds for any $\eps > 0$, letting $\eps \to 0$ we conclude
\[
\mathcal{L}(\gamma) \leq \liminf_{n \to \infty} \mathcal{L}(\gamma_n).
\]
\end{proof}

From \chgd{Arzel\'a}-Ascoli theorem, \Cref{th:arzelaascoli}, and the observations above we obtain the existence of shortest curves in the following sense. %Recall that a set $E$ is compact

\begin{theorem}\label{th:existencegeodesic:compact}
Let $(X,d)$ be any \emph{complete} metric space and $E \subset X$ be a compact set. Let $p \chgd{\neq} q \in E$ such that there exists a continuous curve $\gamma_0: [0,1] \to E$ of finite length $\mathcal{L}(\gamma_0) < \infty$ and $\gamma_0(0) = p$ and $\gamma_0(1) = q$. Then there exists a geodesic between $p$ and $q$, i.e. a curve $\gamma: [0,1] \to E$ such that $\gamma(0) = p$ and $\gamma(1) = q$ and such that 
\[
\mathcal{L}(\gamma) = \inf_{\tilde{\gamma}} \mathcal{L}(\tilde{\gamma})
\]
where the infimum is taken over all continuous curves $\tilde{\gamma}: [0,1] \to E$ with $\tilde{\gamma}(0) = p$ and $\tilde{\gamma}(1) = q$.
\end{theorem}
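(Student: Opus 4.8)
The plan is to run the direct method in the calculus of variations: extract a minimizing sequence of curves, normalize their parametrizations so that \Cref{th:arzelaascoli} applies, pass to a uniform limit, and then invoke lower semicontinuity of the length to see that the limit is actually a minimizer. First I would set
\[
L := \inf_{\tilde{\gamma}} \mathcal{L}(\tilde{\gamma}),
\]
where the infimum runs over all continuous $\tilde{\gamma}: [0,1] \to E$ with $\tilde{\gamma}(0) = p$, $\tilde{\gamma}(1) = q$. By hypothesis $\gamma_0$ is such a curve with $\mathcal{L}(\gamma_0) < \infty$, so $L \leq \mathcal{L}(\gamma_0) < \infty$, while $L \geq d(p,q)$ by \eqref{eq:lengthab}. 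I would then choose a minimizing sequence $\gamma_n$ with $\mathcal{L}(\gamma_n) \to L$; discarding finitely many terms, we may assume $\mathcal{L}(\gamma_n) \leq L+1 =: \Lambda$ for all $n$, so that the lengths are uniformly bounded.

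Next I would apply the Monotone Reparametrization \Cref{pr:reparam} to each $\gamma_n$, producing curves $\tilde{\gamma}_n : [0,1] \to X$ with the same endpoints $p,q$, the same image $\tilde{\gamma}_n([0,1]) = \gamma_n([0,1]) \subset E$ (so they still map into $E$), the same length $\mathcal{L}(\tilde{\gamma}_n) = \mathcal{L}(\gamma_n)$, and the uniform Lipschitz bound $d(\tilde{\gamma}_n(s),\tilde{\gamma}_n(t)) \leq \mathcal{L}(\gamma_n)|s-t| \leq \Lambda|s-t|$. As remarked after \Cref{th:arzelaascoli}, this uniform Lipschitz estimate makes $\{\tilde{\gamma}_n\}$ equicontinuous, and the curves take values in the compact set $E$. \Cref{th:arzelaascoli} then furnishes a subsequence $\tilde{\gamma}_{n_i}$ converging uniformly to a continuous curve $\gamma : [0,1] \to X$. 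Since $E$ is compact, hence closed, the uniform limit satisfies $\gamma([0,1]) \subset E$, and convergence at the endpoints gives $\gamma(0) = p$, $\gamma(1) = q$; thus $\gamma$ is an admissible competitor.

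Finally, uniform convergence implies pointwise convergence, so the Lower Semicontinuity \Cref{la:llowersem} gives
\[
\mathcal{L}(\gamma) \leq \liminf_{i \to \infty} \mathcal{L}(\tilde{\gamma}_{n_i}) = \liminf_{i \to \infty} \mathcal{L}(\gamma_{n_i}) = L.
\]
On the other hand $\gamma$ is itself admissible, so $\mathcal{L}(\gamma) \geq L$ by the definition of the infimum. Hence $\mathcal{L}(\gamma) = L$, and $\gamma$ is the desired geodesic.

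The step I expect to demand the most care is the reparametrization bookkeeping in the second paragraph: I must check that the reparametrized curves genuinely remain inside $E$ (guaranteed by the image-preservation clause of \Cref{pr:reparam}), that the Lipschitz constant is honestly independent of $n$ (which is why the uniform bound $\Lambda$ is extracted before reparametrizing), and that the uniform limit returns to $E$ (using that compactness implies closedness). Everything else is a routine assembly of the three preparatory results. It is worth noting that completeness of $X$ is not actually needed beyond what compactness of $E$ already supplies.
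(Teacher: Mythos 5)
Your proposal is correct and follows essentially the same route as the paper's proof: a minimizing sequence, the Lipschitz reparametrization of \Cref{pr:reparam} to get equicontinuity, \Cref{th:arzelaascoli} to extract a uniform limit, and \Cref{la:llowersem} to conclude. If anything, you are more careful than the paper (which simply assumes $X=E$) about checking that the reparametrized curves and their uniform limit remain in $E$ and that the endpoints are preserved, and your observation that completeness of $X$ is not actually needed is accurate.
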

It is important to note that above the notion of ``shortest curve'' is with respect to $E$ not with respect to $X$, and this might lead to a different notion of what is a shortest curve. Take for example a compact banana-shaped set $E$ in $\R^3$. The straight line from top to bottom of the banana $E$ is likely to not lie within $E$, so it is not the shortest curve in $E$!
\begin{proof}[Proof of \Cref{th:existencegeodesic:compact}]
For simplicity we assume $X = E$. Since there exists one curve connecting $p$ and $q$ with finite length we have
\[
I := \inf_{\tilde{\gamma}} \mathcal{L}(\tilde{\gamma}) \in [0,\infty).
\]
Since there exists one curve connecting $p$ and $q$ there also must be a ``minimizing sequence''
\[
\gamma_k: [0,1] \to X \quad \text{ of finite length, $\mathcal{L}(\gamma_0) < \infty$, and $\gamma_k(0) = p$ and $\gamma_k(1) = q$}
\]
such that 
\[
\mathcal{L}(\gamma_k) \xrightarrow{k \to \infty} I.
\]
We may even assume that 
\[
I \leq \mathcal{L}(\gamma_k)\leq I+\frac{1}{k} \quad \forall k.
\]
By \Cref{pr:reparam} we may assume \chgd{without loss of generality} (otherwise use $\tilde{\gamma}_k$ instead of $\gamma_k$)
\[
|\gamma_k(x)-\gamma_k(y)| \leq \left(I+\frac{1}{k}\right) |x-y| \quad \forall x,y \in [0,1], \quad k \in \mathbb{N}.
\]
By \chgd{Arzel\'a}-Ascoli, Theorem \ref{th:arzelaascoli}, we may assume that we have uniform convergence to some continuous $\gamma: [0,1] \to X$, otherwise we could pass yet again to a subsequence.

Then, by lower semicontinuity of the length, Proposition~\ref{la:llowersem}, we have
\[
\mathcal{L}(\gamma) \leq \liminf_{k \to \infty} \mathcal{L}(\gamma_k)
\]
This means 
\[
I \leq \mathcal{L}(\gamma) \leq  \liminf_{k \to \infty} \mathcal{L}(\gamma_k) = I.
\]
So $\gamma$ is a shortest curve.
\end{proof}

% Lastly let us mention a famous theorem in Analysis: the Rademacher Theorem. It says that Lipschitz continuous maps into Euclidean space are almost everywhere differentiable:
% \begin{definition}[Sets of measure zero]
% A set $A \subset \R$ has Lebesgue measure zero if for any $\eps > 0$ there exists a countable collection of intervals $((x_i-r_i,x_i+r_i))_{i=1}^\infty$ such that 
% \begin{itemize}
%     \item $A \subset \bigcup_{i=1}^\infty (x_i-r_i,x_i+r_i)$, and
%     \item $\sum_{i=1}^\infty r_i < \eps.$
% \end{itemize}
% \end{definition}

% \begin{theorem}[Rademacher Theorem]\label{th:rademacher}
% Let $f:[a,b] \to \R^n$ be Lipschitz continuous, i.e. assume there exists $L \geq 0$ such that 
% \[
% |f(x)-f(y)|_{\R^n} \leq L |x-y| \quad \forall x,y \in [a,b].
% \]
% Then there exists a set of measure zero $\Sigma \subset [a,b]$, such that 
% \begin{itemize}
%     \item $f$ is differentiable for any $x \in [a,b] \setminus \Sigma$
%     \item on each of these points $x \in [a,b] \setminus \Sigma$ we have $|f'(x)| \leq L$
% \end{itemize}
% \end{theorem}

% For a proof of this difficult theorem see \cite[\textsection 3.1.2, Theorem 3.2]{EG15}.

\chgd{\section{Horizontal curves in the Heisenberg group}\label{s:horizontalcurves}
Recall that a differentiable curve $\gamma: [0,1] \to \R^3$ is called horizontal if \eqref{eq:horizontal} holds. In this section we compute important properties of horizontal curves that we will use in the proofs of both our main theorems.}

\begin{proposition}\label{th:derivative2}
If $\gamma \in C^2([0,1])$ %(\ToDo check: $\gamma \in C^1$) 
and \eqref{eq:horizontal} holds.
Then
\[
\lim_{s \to t} \frac{\frac{\gamma^3(t) - \gamma^3(s)}{t-s} + 2\left(\frac{\left(\gamma^2(t) - \gamma^2(s)\right)}{t-s}\gamma^1(s) - \frac{\left(\gamma^1(t) - \gamma^1(s)\right)}{t-s}\gamma^2(s)\right) }{t-s} =0. 
\]
The convergence rate is uniform in $t$.
\end{proposition}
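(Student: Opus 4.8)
The plan is to recognize the whole fraction as $F(s,t)/(t-s)^2$ for a function $F$ that vanishes to second order along the diagonal $s=t$, and then extract both the limit and its uniformity from a single Taylor expansion. Concretely, I would set
\[
 F(s,t) := \left(\gamma^3(t)-\gamma^3(s)\right) + 2\left(\gamma^2(t)-\gamma^2(s)\right)\gamma^1(s) - 2\left(\gamma^1(t)-\gamma^1(s)\right)\gamma^2(s),
\]
so that multiplying numerator and denominator through shows the expression inside the limit is exactly $F(s,t)/(t-s)^2$. The claim thus becomes $F(s,t)=o\big((s-t)^2\big)$, uniformly in $t$.

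Fixing $t$ and regarding $f(s):=F(s,t)$ as a $C^2$ function of $s$ (it is a polynomial in the values $\gamma^i(s)$, which are $C^2$), I would compute the first three Taylor coefficients at $s=t$. Clearly $f(t)=0$. Differentiating once and evaluating at $s=t$, the terms carrying the factors $\gamma^i(t)-\gamma^i(s)$ drop out and one gets $f'(t) = -\dot\gamma^3(t) - 2\gamma^1(t)\dot\gamma^2(t) + 2\gamma^2(t)\dot\gamma^1(t)$, which vanishes precisely by the horizontal condition \eqref{eq:horizontal}. Differentiating twice and evaluating at $s=t$, the cross terms $\dot\gamma^1\dot\gamma^2$ cancel and one is left with $f''(t) = -\ddot\gamma^3(t) - 2\gamma^1(t)\ddot\gamma^2(t) + 2\gamma^2(t)\ddot\gamma^1(t)$; this vanishes upon differentiating \eqref{eq:horizontal}, which gives $\ddot\gamma^3 = -2(\gamma^1\ddot\gamma^2 - \gamma^2\ddot\gamma^1)$. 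Hence $f(t)=f'(t)=f''(t)=0$, and Taylor's theorem with Lagrange remainder yields $F(s,t) = \tfrac12 F_{ss}(\xi,t)(s-t)^2$ for some $\xi$ between $s$ and $t$, where $F_{ss}$ denotes $\partial_s^2 F$.

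For the pointwise limit this already suffices: as $s \to t$ we have $\xi \to t$ and $F_{ss}(\xi,t)\to F_{ss}(t,t)=0$ by continuity, so $F(s,t)/(t-s)^2 = \tfrac12 F_{ss}(\xi,t)\to 0$. For the uniform statement I would observe that $F_{ss}(s,t)$ is a sum of products of the continuous functions $\gamma^i,\dot\gamma^i,\ddot\gamma^i$ evaluated at $s$ together with the values $\gamma^i(t)$, hence is jointly continuous on the compact square $[0,1]^2$ and therefore uniformly continuous there. Since $F_{ss}(t,t)=0$ for every $t$, uniform continuity provides a single $\delta>0$, independent of $t$, with $|F_{ss}(\xi,t)|<2\eps$ whenever $|\xi - t|<\delta$; because $|\xi-t|\le|s-t|$, this gives $\big|F(s,t)/(t-s)^2\big|<\eps$ for all $t$ as soon as $|s-t|<\delta$.

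The main obstacle is the second-order cancellation $f''(t)=0$: horizontality alone is not enough, one genuinely needs to differentiate the constraint \eqref{eq:horizontal} to obtain the identity for $\ddot\gamma^3$. This is exactly where the $C^2$ hypothesis (rather than merely $C^1$) is indispensable, both to make $f''$ exist and to secure the joint continuity of $F_{ss}$ on $[0,1]^2$ that drives the uniformity argument.
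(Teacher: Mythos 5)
Your proof is correct and rests on the same two cancellations as the paper's: the vanishing of the zeroth- and first-order terms via horizontality, and of the second-order term via the differentiated constraint $\ddot{\gamma}^3=-2(\gamma^1\ddot{\gamma}^2-\gamma^2\ddot{\gamma}^1)$, so it is essentially the same Taylor-expansion argument, merely packaged as $\partial_s^kF(t,t)=0$ for $k=0,1,2$ plus the Lagrange remainder instead of the paper's term-by-term little-$o$ bookkeeping. If anything, your formulation handles the uniformity in $t$ more transparently, since it reduces it to the uniform continuity of $\partial_s^2F$ on the compact square $[0,1]^2$ together with $\partial_s^2F(t,t)=0$, where the paper only asserts that the $o(1)$ terms are uniform.
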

\begin{proof}
\if
We want to show \[
\begin{split}
\lim_{s \to t} \left|\frac{\frac{\gamma^3(t) - \gamma^3(s)}{t-s} + 2\left(\frac{\left(\gamma^2(t) - \gamma^2(s)\right)}{t-s}\gamma^1(s) - \frac{\left(\gamma^1(t) - \gamma^1(s)\right)}{t-s}\gamma^2(s)\right) }{t-s} \right| = 0\\
\end{split}
\]
\fi
Since $\gamma$ is $C^2$, we have
\[
\gamma(s) = \gamma(t) + (s-t) \dot{\gamma}(t) + \frac{1}{2} \ddot{\gamma}(t) (s-t)^2 + o(|t-s|^2).
\]
and $o$ is uniform in the domain of $\gamma$.

Then,
\[
\begin{split}
  &\frac{\frac{\gamma^3(t) - \gamma^3(s)}{t-s} + 2\left(\frac{\left(\gamma^2(t) - \gamma^2(s)\right)}{t-s}\gamma^1(s) - \frac{\left(\gamma^1(t) - \gamma^1(s)\right)}{t-s}\gamma^2(s)\right) }{t-s}  \\
  =& \frac{\dot{\gamma}^3(t) - \frac{1}{2} \ddot{\gamma}^3(t) (t-s) + 2\left(\left( \dot{\gamma}^2(t) - \frac{1}{2} \ddot{\gamma}^2(t) (t-s)\right)\gamma^1(s) - \left(\dot{\gamma}^1(t) - \frac{1}{2} \ddot{\gamma}^1(t) (t-s)\right)\gamma^2(s)\right) }{t-s}
  +   o(1)\\
  =& \frac{- \frac{1}{2} \ddot{\gamma}^3(t) (t-s) + 2\left(\left(  - \frac{1}{2} \ddot{\gamma}^2(t) (t-s)\right)\gamma^1(s) - \left( - \frac{1}{2} \ddot{\gamma}^1(t) (t-s)\right)\gamma^2(s)\right) }{t-s}\\
  &+\frac{\dot{\gamma}^3(t) + 2\left(\left( \dot{\gamma}^2(t) \right)\gamma^1(s) - \left(\dot{\gamma}^1(t) \right)\gamma^2(s)\right) }{t-s}\\
    &+   o(1)\\
      =& - \frac{1}{2} \ddot{\gamma}^3(t)  + 2\left( - \frac{1}{2} \ddot{\gamma}^2(t) \gamma^1(t) + \frac{1}{2} \ddot{\gamma}^1(t) \gamma^2(t)\right) +o(1)\\
  &+\frac{\dot{\gamma}^3(t) + 2\left(\left( \dot{\gamma}^2(t) \right)\gamma^1(s) - \left(\dot{\gamma}^1(t) \right)\gamma^2(s)\right) }{t-s}\\
    &+   o(1)
 \
 \end{split}
\]
\chgd{We define}
\[
f(s) := \dot{\gamma}^3(t) + 2\left(\left( \dot{\gamma}^2(t) \right)\gamma^1(s) - \left(\dot{\gamma}^1(t) \right)\gamma^2(s)\right)
\]
and, we observe that by horizontality, $f(t)=0$. Thus 
\[
\frac{f(s)}{t-s} = -\frac{f(s)-f(t)}{s-t} =-f'(t)+o(1)
\]
Then, we have 
\[
f'(s) = 2\left(\left( \dot{\gamma}^2(t) \right)\dot{\gamma}^1(s) - \left(\dot{\gamma}^1(t) \right)\dot{\gamma}^2(s)\right)
\]
So 
\[
f'(t) =  2\left(\left( \dot{\gamma}^2(t) \right)\dot{\gamma}^1(t) - \left(\dot{\gamma}^1(t) \right)\dot{\gamma}^2(t)\right)
\]

Consequently,
\[
\begin{split}
  &\frac{\frac{\gamma^3(t) - \gamma^3(s)}{t-s} + 2\left(\frac{\left(\gamma^2(t) - \gamma^2(s)\right)}{t-s}\gamma^1(s) - \frac{\left(\gamma^1(t) - \gamma^1(s)\right)}{t-s}\gamma^2(s)\right) }{t-s}  \\
      =& - \frac{1}{2} \ddot{\gamma}^3(t)  + 2\left( - \frac{1}{2} \ddot{\gamma}^2(t) \gamma^1(t) + \frac{1}{2} \ddot{\gamma}^1(t) \gamma^2(t)\right) +o(1)\\
  &-\left (   2\left(\left( \dot{\gamma}^2(t) \right)\dot{\gamma}^1(t) - \left(\dot{\gamma}^1(t) \right)\dot{\gamma}^2(t)\right)\right )+ o(1)\\
    &+   o(1)\\
        =& - \frac{1}{2} \frac{d}{dt}\brac{ \dot{\gamma}^3(t)  + 2\left( \dot{\gamma}^2(t) \gamma^1(t)- \dot{\gamma}^1(t) \gamma^2(t)\right) }\\
        & - \frac{1}{2} \brac{  - 2\left( \dot{\gamma}^2(t) \dot{\gamma}^1(t)- \dot{\gamma}^1(t) \dot{\gamma}^2(t)\right) }\\
  &-\left (  + 2\left(\left( \dot{\gamma}^2(t) \right)\dot{\gamma}^1(t) - \left(\dot{\gamma}^1(t) \right)\dot{\gamma}^2(t)\right)\right )\\
    &+   o(1)\\
 \
 =&       0\\
          &-\left (  + 1\left(\left( \dot{\gamma}^2(t) \right)\dot{\gamma}^1(t) - \left(\dot{\gamma}^1(t) \right)\dot{\gamma}^2(t)\right)\right )\\
    &+   o(1)\\
    =&\chgd{o(1)}.
 \end{split}
\]
Then
\[
\lim_{s \to t} \frac{\frac{\gamma^3(t) - \gamma^3(s)}{t-s} + 2\left(\frac{\left(\gamma^2(t) - \gamma^2(s)\right)}{t-s}\gamma^1(s) - \frac{\left(\gamma^1(t) - \gamma^1(s)\right)}{t-s}\gamma^2(s)\right) }{t-s} =0. 
\]
as desired.
%\Josh{the above part is fine but the below used the wrong metric. should be right now}

% To get the theorem, let us look at two directions

% \begin{lemma}
% Assumptions as in the theorem, then 
% \[
% \mathcal{L}(\gamma) \geq \int_{[0,1]} \left (\left (\dot{\gamma}^1(t)\right )^4 + \left (\dot{\gamma}^2(t) \right)^4 \right )^{\frac{1}{4}} dt
% \] 
% \end{lemma}
% \begin{proof}

% Observe that 
% \[
% \frac{d_K(\gamma(t_{i+1}),\gamma(t_i))}{|t_{i+1}-t_i|} \geq \frac{|\gamma^1(t_{i+1})-\gamma^1(t_i)|^4 + |\gamma^2(t_{i+1})-\gamma^2(t_i)|^4}{|t_{i+1}-t_i|} \xrightarrow{|t_{i+1}-t_i| \to 0} \left (\left (\dot{\gamma}^1(t)\right )^4 + \left (\dot{\gamma}^2(t) \right)^4 \right )^{\frac{1}{4}}.
% \]
% \ToDo details
% \end{proof}

Then,

%\[
%\lim_{s \to t} \left|\frac{\gamma^1(t)-\gamma^1(s)}{t-s}\right|^4 + \left|\frac{\gamma^2(t)-\gamma^2(s)}{t-s}\right|^4
%= \left|\dot{\gamma}^1(t)\right|^4 + \left|\dot{\gamma}^2(t)\right|^4
%\]

\begin{align*}
&\lim_{s \to t} \frac{\left[\brac{|\gamma^1(t)-\gamma^1(s)|^2 + |\gamma^2(t)-\gamma^2(s)|^2} +  \left|\gamma^3(t) - \gamma^3(s) + 2\left(\left(\gamma^2(t) - \gamma^2(s)\right)\gamma^1(s) - \left(\gamma^1(t) - \gamma^1(s)\right)\gamma^2(s)\right)\right| \right]^2}{|t-s|^4}  \\
&=\lim_{s \to t} \frac{\brac{|\gamma^1(t)-\gamma^1(s)|^2 + |\gamma^2(t)-\gamma^2(s)|^2}^2}{|t-s|^4}\\
=& \lim_{s \to t} \frac{|\gamma^1(t)-\gamma^1(s)|^4 + 2|\gamma^1(t)-\gamma^1(s)|^2 |\gamma^2(t)-\gamma^2(s)|^2 +  |\gamma^2(t)-\gamma^2(s)|^4}{|t-s|^4}\\
=& \lim_{s \to t} \left(\frac{|\gamma^1(t)-\gamma^1(s)|}{|t-s|}\right)^4 + 2\left(\frac{|\gamma^1(t)-\gamma^1(s)|}{|t-s|}\right)^2 \left(\frac{|\gamma^2(t)-\gamma^2(s)|}{|t-s|}\right)^2 + \left(\frac{|\gamma^2(t)-\gamma^2(s)|}{|t-s|}\right)^4\\
=& \dot{\gamma}^1(t)^4 + 2\dot{\gamma}^1(t)^2 \dot{\gamma}^2(t)^2 + \dot{\gamma}^4(t) = \left( \dot{\gamma}^1(t)^2 + \dot{\gamma}^2(t)^2 \right)^2   
\end{align*}
and the convergence is uniformly in $t$ by the above considerations.
\end{proof}

From \Cref{th:derivative2} we readily obtain
\begin{corollary}\label{co:diffofgamma}
If $\gamma \in C^2([0,1],\R^3)$ and \eqref{eq:horizontal} holds 
\[
\frac{d_K(\gamma(t),\gamma(s))}{|t-s|} \xrightarrow{s \to t} \sqrt{ \dot{\gamma}^1(t)^2 + \dot{\gamma}^2(t)^2    }
%\frac{d_K(\gamma(t),\gamma(s))}{|t-s|} \xrightarrow{s \to t} \sqrt[4]{|\dot{\gamma}^1(t)|^4 + |\dot{\gamma}^2(t)|^4}
\]
The convergence is uniform in $t$. In particular we have 
\[
 \mathcal{L}_{K}(\gamma) < \infty.
\]
\end{corollary}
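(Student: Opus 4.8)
The plan is to extract the limit from the fourth power of the Korányi distance, where the two contributions decouple at different orders in $|t-s|$. Setting $p=\gamma(t)$, $q=\gamma(s)$ in the definition of $d_K$ and abbreviating
\[
A(s,t) := |\gamma^1(t)-\gamma^1(s)|^2 + |\gamma^2(t)-\gamma^2(s)|^2, \qquad B(s,t) := \gamma^3(t) - \gamma^3(s) + 2\brac{\gamma^2(t)\gamma^1(s) - \gamma^1(t)\gamma^2(s)},
\]
we have $d_K(\gamma(t),\gamma(s))^4 = A(s,t)^2 + B(s,t)^2$, so that
\[
\frac{d_K(\gamma(t),\gamma(s))^4}{|t-s|^4} = \brac{\frac{A(s,t)}{|t-s|^2}}^2 + \brac{\frac{B(s,t)}{|t-s|^2}}^2.
\]
First I would treat the $A$-term: since $\gamma \in C^2 \subset C^1$ on the compact interval $[0,1]$, the difference quotients $(\gamma^i(t)-\gamma^i(s))/(t-s)$ converge to $\dot{\gamma}^i(t)$ uniformly in $t$ (Taylor's theorem with uniform remainder). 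Hence $A(s,t)/|t-s|^2 \to \dot{\gamma}^1(t)^2 + \dot{\gamma}^2(t)^2$ uniformly, and squaring gives $\brac{A/|t-s|^2}^2 \to \brac{\dot{\gamma}^1(t)^2 + \dot{\gamma}^2(t)^2}^2$ uniformly.

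The $B$-term is exactly where \Cref{th:derivative2} enters. Expanding the telescoping products shows that the quotient appearing in \Cref{th:derivative2} is precisely $B(s,t)/(t-s)^2$; thus the proposition states $B(s,t)/|t-s|^2 \to 0$ uniformly in $t$, and squaring gives $\brac{B/|t-s|^2}^2 \to 0$ uniformly. Adding the two limits yields
\[
\frac{d_K(\gamma(t),\gamma(s))^4}{|t-s|^4} \xrightarrow{s \to t} \brac{\dot{\gamma}^1(t)^2 + \dot{\gamma}^2(t)^2}^2
\]
uniformly in $t$. Taking fourth roots produces the asserted limit $\sqrt{\dot{\gamma}^1(t)^2 + \dot{\gamma}^2(t)^2}$; uniformity is preserved because all quantities involved stay in a bounded interval $[0,R]$ (as $\gamma \in C^2$ has bounded first derivatives on $[0,1]$) on which $x \mapsto x^{1/4}$ is uniformly continuous by Heine--Cantor.

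For the finiteness $\mathcal{L}_K(\gamma) < \infty$ I would upgrade the uniform limit to a Lipschitz-type estimate. Applying the uniform convergence with $\eps = 1$ furnishes $\delta > 0$ such that $d_K(\gamma(t),\gamma(s)) \leq (M+1)\,|t-s|$ whenever $|t-s| < \delta$, where $M := \sup_{t \in [0,1]} \sqrt{\dot{\gamma}^1(t)^2 + \dot{\gamma}^2(t)^2} < \infty$. Given any partition of $[0,1]$, inserting additional points until every consecutive gap is smaller than $\delta$ can only increase the polygonal sum $\sum_i d_K(\gamma(t_i),\gamma(t_{i-1}))$ by the triangle inequality, while the refined sum is bounded by $(M+1)\sum_i |t_i-t_{i-1}| = M+1$. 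Taking the supremum over all partitions gives $\mathcal{L}_K(\gamma) \leq M+1 < \infty$.

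The substantive points are two. The first is the algebraic recognition that the \emph{a priori} mysterious quotient controlled by \Cref{th:derivative2} is exactly the cross term $B(s,t)/(t-s)^2$, which is what guarantees that $B$ decays faster than the leading term $A$ and therefore drops out of the fourth-root limit. The second, more a matter of care than of difficulty, is tracking uniformity in $t$ through the squaring, the fourth root, and the partition refinement --- the fourth root in particular is not Lipschitz at the origin, so one must invoke uniform continuity on a bounded set rather than a global Lipschitz bound.
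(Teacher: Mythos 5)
Your proof is correct and follows essentially the same route as the paper: decompose $d_K(\gamma(t),\gamma(s))^4$ into the horizontal part $A(s,t)^2$ and the vertical part $B(s,t)^2$, use \Cref{th:derivative2} to show $B(s,t)/(t-s)^2 \to 0$ uniformly so that only the horizontal term survives the fourth root, and identify the limit of $A(s,t)/|t-s|^2$ via the $C^1$ difference quotients. Your explicit partition-refinement argument for $\mathcal{L}_K(\gamma)<\infty$ supplies a detail the paper leaves implicit, but the underlying idea is the same.
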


\section{Existence of shortest curves in the Heisenberg group} \label{s:heisenbergcurves}
In this section we want to show Theorem~\ref{th:existencegeodesic}.

Of course we would like to apply Theorem~\ref{th:existencegeodesic:compact}, however we need to be careful with the compactness assumption in that theorem, since $\mathbb{H}_1$ is not compact. However, one could justifiably believe that any curve $\gamma: [0,1] \to \mathbb{H}_1$ which goes too far away from $p$ and $q$ is not a good candidate for shortest curve. We need to quantify this and for this we compare the \chgd{Kor\'anyi} metric locally with the Euclidean metric.

\begin{lemma}\label{la:compactsets}
Let $K \subset \R^3$ be compact (in the sense of the Euclidean metric). Then $K \subset \mathbb{H}_1$ is compact (in the sense of the \chgd{Kor\'anyi} metric).
\end{lemma}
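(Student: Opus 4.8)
The plan is to use the sequential characterization of compactness recalled before \Cref{th:arzelaascoli}: I will show that every sequence in $K$ admits a subsequence converging \emph{in the Kor\'anyi metric} to a point of $K$. The bridge between the two metrics will be a one-sided comparison estimate showing that Euclidean convergence on $K$ forces Kor\'anyi convergence; since $K$ is already compact in the Euclidean sense, this is all that is needed.

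First I would establish the key estimate: there is a constant $C = C(K) > 0$ such that
\[
d_K(p,q) \leq C\, |p-q|_{\R^3}^{1/2} \qquad \text{for all } p,q \in K \text{ with } |p-q|_{\R^3} \leq 1.
\]
To obtain this, I would rewrite the vertical cross term by inserting and cancelling $q_1 q_2$, namely
\[
p_2 q_1 - p_1 q_2 = q_1(p_2 - q_2) - q_2 (p_1 - q_1),
\]
so that, using $|q_1|,|q_2| \leq M$ on the bounded set $K$,
\[
\left| p_3 - q_3 + 2(p_2 q_1 - p_1 q_2) \right| \leq |p_3 - q_3| + 2M\left(|p_1-q_1| + |p_2-q_2|\right) \leq C_1 |p-q|_{\R^3}.
\]
Plugging this together with the obvious bound $(|p_1-q_1|^2+|p_2-q_2|^2)^2 \leq |p-q|_{\R^3}^4$ into the definition of $d_K$, and using $|p-q|_{\R^3}^4 \leq |p-q|_{\R^3}^2$ when $|p-q|_{\R^3} \leq 1$, gives $d_K(p,q)^4 \leq C_2 |p-q|_{\R^3}^2$, which is the claim after taking fourth roots.

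With the estimate in hand, the conclusion is immediate. Given any sequence $(x_n)_{n \in \N} \subset K$, Euclidean compactness of $K$ yields a subsequence $x_{n_i}$ and a point $x \in K$ with $|x_{n_i} - x|_{\R^3} \to 0$. For $i$ large enough one has $|x_{n_i} - x|_{\R^3} \leq 1$, so the estimate gives $d_K(x_{n_i}, x) \leq C |x_{n_i} - x|_{\R^3}^{1/2} \to 0$. Thus $(x_n)$ has a subsequence converging to $x \in K$ in the Kor\'anyi metric, and $K$ is compact in $\mathbb{H}_1$.

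I expect the only genuine work to be the comparison estimate, and within it the handling of the mixed term $2(p_2 q_1 - p_1 q_2)$, where boundedness of $K$ is essential; the remainder is a routine application of sequential compactness. Note that only the implication ``Euclidean convergence $\Rightarrow$ Kor\'anyi convergence'' is required here, so I do not need to prove the reverse comparison.
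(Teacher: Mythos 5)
Your proposal is correct and follows essentially the same route as the paper's proof: both rely on the decomposition $p_2q_1 - p_1q_2 = q_1(p_2-q_2) - q_2(p_1-q_1)$ together with boundedness of $K$ to show that Euclidean closeness forces Kor\'anyi closeness, and then conclude by sequential compactness. Your explicit H\"older-$\tfrac12$ bound is just a slightly more quantitative form of the $\eps$--$\delta$ statement the paper derives.
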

\begin{proof}
Since $K$ is compact as Euclidean set $\R^3$ it is bounded and thus there must be some $\Lambda > 0$ such that 
\[
\max \{|p_1|,|p_2|, |p_2|\} < \Lambda \quad \forall p = (p_1,p_2,p_3) \in K.
\]
Using repeatedly Young's inequality $2ab \leq a^2+b^2$ we find that for $p,q \in K$
\[
\begin{split}
d_K(q,p) =& (\brac{|p_1-q_1|^2 + |p_2-q_2|^2}^2 + \left |p_3 - q_3 + 2(p_2 q_1 - p_1 q_2)\right |^2)^{\frac{1}{4}} \\
\leq& (\brac{|p_1-q_1|^2 + |p_2-q_2|^2}^2 + 2\left |p_3 - q_3 \right |^2+ 2\left |2(p_2 q_1 - p_1 q_2)\right |^2)^{\frac{1}{4}} \\
=& (\brac{|p_1-q_1|^2 + |p_2-q_2|^2}^2 + 2\left |p_3 - q_3 \right |^2+ 2\left |2(p_2 -q_2)q_1 + (q_1- p_1) q_2)\right |^2)^{\frac{1}{4}} \\
\leq& (\brac{|p_1-q_1|^2 + |p_2-q_2|^2}^2 + 2\left |p_3 - q_3 \right |^2+ 8\left ( |p_2 -q_2|\Lambda + |q_1- p_1| \Lambda )\right )^2)^{\frac{1}{4}} \\
\end{split}
\]
We conclude that for each $\eps > 0$ there exists $\delta > 0$ such that if $p,q \in K$ and $|p-q| < \delta$ (in the Euclidean sense) then $d_{K}(p,q) < \eps$. 

In particular any (Euclidean) converging sequence in $K$ also converges in the sense of the \chgd{Kor\'anyi} metric $d_K$. Thus $K$ is also compact in the \chgd{Kor\'anyi} sense.
\end{proof}

The following lemma shows that ``far away'' in the Euclidean sense implies ``far away'' in the \chgd{Kor\'anyi} sense.
\begin{lemma}\label{la:largeeuclidlargeK}
Fix $q \in \R^3$. For any $\Lambda > 0$ there exists $\Theta >0$ such that the following is true:
if for some $p \in \R^3$ we have  
\[
|p-q| > \Theta
\]
then 
\[
d_{K}(p,q) > \Lambda.
\]
\end{lemma}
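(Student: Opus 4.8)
The plan is to prove the contrapositive: I would show that whenever the Kor\'anyi distance $d_{K}(p,q)$ is bounded by $\Lambda$, the Euclidean distance $|p-q|$ is automatically bounded by a constant $\Theta = \Theta(\Lambda,q)$ depending only on $\Lambda$ and the fixed point $q$. Taking $\Theta$ to be this bound then yields the claim, since $|p-q| > \Theta$ is incompatible with $d_{K}(p,q) \le \Lambda$.

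First I would introduce the abbreviations $a := p_1 - q_1$, $b := p_2 - q_2$, $c := p_3 - q_3$, so that $|p-q|^2 = a^2 + b^2 + c^2$. The one algebraic point worth isolating is that the cross term in the definition of $d_{K}$ can be expressed through $a$ and $b$ alone: since
\[
p_2 q_1 - p_1 q_2 = (p_2 - q_2) q_1 - (p_1 - q_1) q_2 = b q_1 - a q_2,
\]
the definition of the Kor\'anyi metric becomes
\[
d_{K}(p,q)^4 = (a^2 + b^2)^2 + \left| c + 2(b q_1 - a q_2) \right|^2.
\]

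Next, assuming $d_{K}(p,q) \le \Lambda$, both nonnegative summands on the right are at most $\Lambda^4$. From the first, $(a^2+b^2)^2 \le \Lambda^4$ gives $a^2 + b^2 \le \Lambda^2$, hence $|a|, |b| \le \Lambda$. From the second, $|c + 2(b q_1 - a q_2)| \le \Lambda^2$, so by the triangle inequality
\[
|c| \le \Lambda^2 + 2|b|\,|q_1| + 2|a|\,|q_2| \le \Lambda^2 + 2\Lambda(|q_1| + |q_2|).
\]
Combining the two estimates bounds $|p-q|^2 = a^2 + b^2 + c^2$ by a quantity depending only on $\Lambda$ and $q$, and I would simply set $\Theta$ equal to the square root of that quantity.

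The only step demanding any thought is the rewriting of $p_2 q_1 - p_1 q_2$ as $b q_1 - a q_2$; this is exactly what allows the bounds on the first two coordinates to feed into a bound on the third. I do not expect any genuine obstacle: the remaining inequalities are routine applications of the triangle inequality, and the result is in the same spirit as \Cref{la:compactsets}, expressing the comparability of the Euclidean and Kor\'anyi metrics, now in the complementary direction that unboundedness is preserved.
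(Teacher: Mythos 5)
Your proof is correct. It establishes the same comparability of the two metrics, but organized through the contrapositive, which comes out cleaner than the paper's argument. The paper works directly: it bounds $d_K(p,q)$ from below via $d_K(p,q)^2 \ge |p_3-q_3| - 2\bigl(|q_1|\,|p_2-q_2| + |q_2|\,|p_1-q_1|\bigr)$, splits into two cases according to whether a horizontal coordinate difference or the vertical one is large, and must therefore choose $\Theta$ to satisfy two separate conditions ($\Theta > \sqrt{3}\,\Lambda$ and $\tfrac{1}{\sqrt{3}}\Theta - 2(|q_1|+|q_2|)\Lambda > \Lambda^2$). Your contrapositive version replaces the case analysis by two upper bounds read off directly from $d_K(p,q)^4 = (a^2+b^2)^2 + |c + 2(bq_1 - aq_2)|^2 \le \Lambda^4$: first $|a|,|b| \le \Lambda$, then $|c| \le \Lambda^2 + 2\Lambda(|q_1|+|q_2|)$, and $\Theta$ is simply the resulting explicit bound on $|p-q|$. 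The one genuinely substantive step --- rewriting $p_2 q_1 - p_1 q_2$ as $bq_1 - aq_2$ so that the vertical term is controlled by the already-bounded horizontal differences --- is identical in both proofs; what your route buys is the elimination of the case distinction and a single closed-form expression for $\Theta$, at no cost in generality.
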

\begin{proof}
Observe that for any $p,q \in \R^3$
\begin{align*}
    d_K(p,q)^{\chgd{2}} \geq \left |p_3 - q_3 + 2(p_2 q_1 - p_1 q_2)\right | &= \left|p_3 - q_3 + 2((p_2-q_2) q_1  + q_1q_2 - (p_1-q_1) q_2 -q_1q_2)\right |\\
    &= \left|p_3 - q_3 + 2((p_2-q_2) q_1 - (p_1-q_1) q_2\right |\\
    &\geq \left(|p_3 - q_3| - 2\left|q_1(p_2-q_2) - q_2(p_1-q_1)\right|\right)\\
    &\geq \left(|p_3 - q_3| - 2\left(|q_1||p_2-q_2| + |q_2||p_1-q_1|\right)\right)\\
    &\geq \left(|p_3 - q_3| - 2\left(|q_1||p_2-q_2| + |q_2||p_1-q_1|\right)\right)\\
\end{align*}

Now fix $q =(q_1,q_2,q_3) \in \R^3$ and $\Lambda > 0$ and set 
\[
\Gamma := |q_1| + |q_2|.
\]
Take $\Theta > 0$ so that the following conditions are satisfied: $\Theta > \sqrt{3} \Lambda$ and $\frac{1}{\sqrt{3}}\Theta - 2\Gamma\, \Lambda > \Lambda^2$.

Now take $p = (p_1,p_2,p_3)\in \R^3$ such that
\[
|p-q| > \Theta.
\]
Then
\[
\max\{|p_1-q_1|,|p_2-q_2|,|p_3-q_3|\} > \frac{1}{\sqrt{3}} \Theta.
\]
\chgd{Then either}
\[
\max\{|p_1-q_1|,|p_2-q_2|\} > \Lambda
\]
or 
\[
|p_3-q_3| > \frac{1}{\sqrt{3}}\Theta.
\]
From the above estimates we have 
\[
d_K(p,q) \geq \max\left \{|p_1-q_1|,|p_2-q_2|, \left(|p_3 - q_3| - 2\left(|q_1||p_2-q_2| + |q_2||p_1-q_1|\right)\right)^\frac{1}{2}\right \} 
\]
In the case that $\max\{|p_1-q_1|,|p_2-q_2|\} > \Lambda$ we conclude that
\[
d_{K}(p,q) > \Lambda,
\]
and we are done. 
If on the other hand both $|p_1-q_1|$ or $|p_2-q_2| < \Lambda$ then we have $|p_3-q_3| > \frac{1}{\sqrt{3}}\Theta$ and thus
\[
\begin{split}
d_K(p,q)^2 \geq& |p_3 - q_3| - 2\left(|q_1||p_2-q_2| + |q_2||p_1-q_1|\right) \\
\geq & \frac{1}{\sqrt{3}}\Theta - 2\Gamma\, \Lambda  
\end{split}
\]
Again in this case, by the choice of $\Theta$ we find that 
\[
d_K(p,q)^2 > \Lambda^2,
\]
and we can conclude $d_K(p,q) > \Lambda$ as desired.
\end{proof}

\begin{proof}[Proof of \Cref{th:existencegeodesic}]
Fix $p,q \in \R^3$. There exists a smooth horizontal curve $\tilde{\gamma}$ connecting $p$ and $q$, take for example the $\mathcal{L}_{cc}$-geodesic from  \cite{HZ}, and in view of \Cref{co:diffofgamma} $\tilde{\gamma}$ has finite length: $\mathcal{L}_{K}(\tilde{\gamma}) < \infty$. 

Let $R > 0$ such that for any $r \in \R^3$ with $|p-r| > R$ we have in view of \Cref{la:largeeuclidlargeK}
\[
d_{K}(p,r) > \mathcal{L}_{K}(\tilde{\gamma}).
\]
This implies that any continuous curve $\gamma: [0,1] \to \R^3$ with $\gamma(0) = p$ and $\gamma(1) = q$ and $|\gamma(t)-p| > R$ for any $t \in (0,1)$ we have 
\[
\mathcal{L}_\chgd{K}(\gamma) > \mathcal{L}_\chgd{K}(\tilde{\gamma}).
\]
Set $E := \{r \in \R^3: |r-p| \leq R\}$ which is a compact set in the Euclidean sense, and thus in view of Lemma~\ref{la:compactsets} also in the \chgd{Kor\'anyi} sense. Then we have shown that 
\[
\inf_{\gamma: [0,1] \to E} \mathcal{L}_K(\gamma) =\inf_{\gamma: [0,1] \to \R^3} \mathcal{L}_K(\gamma),
\]
where both infima are taken over continuous curves $\gamma$ with $\gamma(0) = p$ and $\gamma(1) = q$. Now we can finally apply \Cref{th:existencegeodesic:compact}. Thus, there is a shortest curve between $p$ and $q$.
\end{proof}

% \begin{definition} Lipschitz continuity

% For a curve $\gamma: [0,1] \to X$, with $X$ a metric space we define
% \begin{enumerate}
%     \item Lipschitz curve in the \chgd{Kor\'anyi} sense:
% \[
% d_{K}(\gamma(s),\gamma(t)) \leq L |s-t|
% \]

% \item Lipschitz curve in the Euclidean sense 
% \[
% |\gamma(s)-\gamma(t)| \leq L |s-t|
% \]
% where $s,t \in [0,1]$
% \end{enumerate}
% \end{definition}

% \begin{lemma}
% If $\gamma$ is Lipschitz in the \chgd{Kor\'anyi} sense, then $\gamma$ is Lipschitz in the Euclidean sense
% \end{lemma}
% \begin{proof}

% 
% \end{proof}

\section{Length of Curves in the Heisenberg group -- Proof of Theorem~\ref{th:lengthsame}}\label{s:proofofmainthm}

In this section we show that 
\[
\mathcal{L}_{cc}(\gamma) = \mathcal{L}_{K}(\gamma),
\]
whenever $\gamma \in C^2$ is a horizontal curve, i.e. whenever $\gamma$ satisfies \eqref{eq:horizontal}.

\begin{proof}[Proof of \Cref{th:lengthsame}]
From \eqref{eq:horizontal} in particular,
\[
\frac{d}{dt} \left (\dot{\gamma}^3(t) + 2 \left (\dot{\gamma}^2(t)\gamma^1(t) - \dot{\gamma}^1(t) \gamma^2(t)\right ) \right )= 0
\]
We apply \Cref{th:derivative2} and obtain
\begin{align*}
&\lim_{s \to t} \left(\frac{d_K(\gamma(t),\gamma(s))}{|t-s|}\right)^4
%= \left( \dot{\gamma}^1(t)^2 + \dot{\gamma}^2(t)^2 \right)^2   
=\lim_{s \to t} \frac{\brac{|\gamma^1(t)-\gamma^1(s)|^2 + |\gamma^2(t)-\gamma^2(s)|^2}^2}{|t-s|^4}\\
=& \lim_{s \to t} \frac{|\gamma^1(t)-\gamma^1(s)|^4 + 2|\gamma^1(t)-\gamma^1(s)|^2 |\gamma^2(t)-\gamma^2(s)|^2 +  |\gamma^2(t)-\gamma^2(s)|^4}{|t-s|^4}\\
=& \lim_{s \to t} \left(\frac{|\gamma^1(t)-\gamma^1(s)|}{|t-s|}\right)^4 + 2\left(\frac{|\gamma^1(t)-\gamma^1(s)|}{|t-s|}\right)^2 \left(\frac{|\gamma^2(t)-\gamma^2(s)|}{|t-s|}\right)^2 + \left(\frac{|\gamma^2(t)-\gamma^2(s)|}{|t-s|}\right)^4\\
=& \dot{\gamma}^1(t)^4 + 2\dot{\gamma}^1(t)^2 \dot{\gamma}^2(t)^2 + \dot{\gamma}^4(t) = \left( \dot{\gamma}^1(t)^2 + \dot{\gamma}^2(t)^2 \right)^2   
\end{align*}

Then, taking the fourth root,
\[
\lim_{s \to t} \frac{d_K(\gamma(t),\gamma(s))}{|t-s|} = \sqrt{ \dot{\gamma}^1(t)^2 + \dot{\gamma}^2(t)^2    }
\]
\end{proof}

\begin{proof}[Proof of $\mathcal{L}_{cc}(\gamma) = \mathcal{L}_{K}(\gamma)$ if $\gamma$ is horizontal]
%Use the Riemann sum theorem to show the result for $C^2$-maps \ToDo
%Want to show:
%\[
%\sup_{p} {\sum \frac{d_K(\gamma(t_i), \gamma(t_{i-1}))}{|t_i-t_{i-1}|}} |t_i-t_{i-1}| = \int %\sqrt{|\dot{\gamma}^1(t)|^2 + |\dot{\gamma}^2(t)|^2} \: dt
%\]
%where $p$ is a partition of $[0,1]$.\\
From Corollary 19, we see that 
\[
\lim_{s \to t} \frac{d_K(\gamma(t),\gamma(s))}{|t-s|} = \sqrt{ \dot{\gamma}^1(t)^2 + \dot{\gamma}^2(t)^2    }
\]
\chgd{uniformly in $t$.}
%By the fundamental theorem of calculus,
%\[
%\gamma(t_i) - \gamma(t_{i-1}) = \dot \gamma(t_{i-1})(t_i-t_{i-1}) + o(|t_{i-1}-t_i|)
%\]
%where
%\[
%o(|t_{i-1}-t_i|) \leq \eps |t_{i-1}-t_i|
%\]
Then, from the above limit, given some $\eps > 0$ choose $\delta>0$, such that when $|t_i-t_{i-1}|<\delta$, we have
\[
\left |\frac{d(\gamma(t_i),\gamma(t_{i-1}))}{|t_i-t_{i-1}|} -  \sqrt{|\dot{\gamma}^1(t_i)|^2 + |\dot{\gamma}^2(t_i)|^2} \right | < \eps.
\]
Multiplying by $|t_i-t_{i-1}|$,
\[
\left |\frac{d(\gamma(t_i),\gamma(t_{i-1}))}{|t_i-t_{i-1}|}|t_i-t_{i-1}| -  \sqrt{|\dot{\gamma}^1(t_i)|^2 + |\dot{\gamma}^2(t_i)|^2}|t_i-t_{i-1}| \right | < \eps|t_i-t_{i-1}|.
\]
Now, let $\mathcal{P}$ be the set of partitions of $[0,1]$ such that for any $\mu \in \mathcal{P}$, we have $|t_i-t_{i-1}| < \delta$ for each $t_i$ in $\mu$. % (this is always possible, by refinement property of the integral). Taking the supremum over $\mathcal{P}$.
Then, for a given $\mu \in \mathcal{P}$,
\begin{align*}
\sum_{\chgd{t_i \in \mu, i \geq 1}} \left |\frac{d(\gamma(t_i),\gamma(t_{i-1}))}{|t_i-t_{i-1}|}|t_i-t_{i-1}| -  \sqrt{|\dot{\gamma}^1(t_i)|^2 + |\dot{\gamma}^2(t_i)|^2}|t_i-t_{i-1}| \right | &<\eps \underbrace{ \sum_{\chgd{t_i \in \mu, i \geq 1}} |t_i-t_{i-1}|}_{=1}
&= \eps
\end{align*}
So, we get

\begin{align*}
&\left |\sum_{\chgd{t_i \in \mu, i \geq 1}} \frac{d(\gamma(t_i),\gamma(t_{i-1}))}{|t_i-t_{i-1}|} |t_i-t_{i-1}| - \sum_{\chgd{t_i \in \mu, i \geq 1}} \sqrt{|\dot{\gamma}^1(t_i)|^2 |\dot{\gamma}^2(t_i)|^2} |t_i-t_{i-1}| \right | \\
&=\left |\sum_{\chgd{t_i \in \mu, i \geq 1}} d(\gamma(t_i),\gamma(t_{i-1})) - \sum_{\chgd{t_i \in \mu, i \geq 1}} \sqrt{|\dot{\gamma}^1(t_i)|^2 + |\dot{\gamma}^2(t_i)|^2} |t_i-t_{i-1}| \right |<  \eps 
\end{align*}

Then,
\begin{align*}
\eps &> \sup_{\mu \in \mathcal{P}} \left |\sum_{\chgd{t_i \in \mu, i \geq 1}} d(\gamma(t_i),\gamma(t_{i-1})) - \sum_{\chgd{t_i \in \mu, i \geq 1}} \sqrt{|\dot{\gamma}^1(t_i)|^2 + |\dot{\gamma}^2(t_i)|^2} |t_i-t_{i-1}| \right |  \\
& \geq \sup_{\mu \in \mathcal{P}} \left |\sum_{\chgd{t_i \in \mu, i \geq 1}} d(\gamma(t_i),\gamma(t_{i-1})) \right|-\sup_{\mu\in \mathcal{P}}\left| \sum_{\chgd{t_i \in \mu, i \geq 1}} \sqrt{|\dot{\gamma}^1(t_i)|^2+ |\dot{\gamma}^2(t_i)|^2} |t_i-t_{i-1}| \right |\\
&= \mathcal{L}_K(\gamma) - \sup_{\mu\in \mathcal{P}}\left| \sum_{\chgd{t_i \in \mu, i \geq 1}} \sqrt{|\dot{\gamma}^1(t_i)|^2 + |\dot{\gamma}^2(t_i)|^2} |t_i-t_{i-1}| \right |
\end{align*}

Similarly,
\begin{align*}
\eps &> \sup_{\mu \in \mathcal{P}} \left |\sum_{\chgd{t_i \in \mu, i \geq 1}} d(\gamma(t_i),\gamma(t_{i-1})) - \sum_{\chgd{t_i \in \mu, i \geq 1}} \sqrt{|\dot{\gamma}^1(t_i)|^2 + |\dot{\gamma}^2(t_i)|^2} |t_i-t_{i-1}| \right |  \\
& \geq \sup_{\mu\in \mathcal{P}}\left| \sum_{\chgd{t_i \in \mu, i \geq 1}} \sqrt{|\dot{\gamma}^1(t_i)|^2+ |\dot{\gamma}^2(t_i)|^2} |t_i-t_{i-1}| \right |-\sup_{\mu \in \mathcal{P}} \left |\sum_{\chgd{t_i \in \mu, i \geq 1}} d(\gamma(t_i),\gamma(t_{i-1})) \right|\\
&=  \sup_{\mu\in \mathcal{P}}\left| \sum_{\chgd{t_i \in \mu, i \geq 1}} \sqrt{|\dot{\gamma}^1(t_i)|^2 + |\dot{\gamma}^2(t_i)|^2} |t_i-t_{i-1}| \right |-\mathcal{L}_K(\gamma)
\end{align*}
That is,
\begin{align*}
\eps > \left | \sup_{\mu\in \mathcal{P}}\left| \sum_{\chgd{t_i \in \mu, i \geq 1}} \sqrt{|\dot{\gamma}^1(t_i)|^2 + |\dot{\gamma}^2(t_i)|^2} |t_i-t_{i-1}| \right |-\mathcal{L}_K(\gamma) \right |
\end{align*}

So, we have 
\[
\begin{split}
&\left |\mathcal{L}_K(\gamma) - \sup_{\mu\in \mathcal{P}}\left| \sum_{\chgd{t_i \in \mu, i \geq 1}} \sqrt{|\dot{\gamma}^1(t_i)|^2 +  |\dot{\gamma}^2(t_i)|^2} |t_i-t_{i-1}| \right | \right |\\
\leq& \left |\mathcal{L}_K(\gamma) - \sup_{\mu\in \mathcal{P}} \sum_{\chgd{t_i \in \mu, i \geq 1}} \sqrt{|\dot{\gamma}^1(t_i)|^2 + |\dot{\gamma}^2(t_i)|^2} |t_i-t_{i-1}|\right |  < \eps
\end{split}
\]
Note that, since $\chgd{\dot{\gamma}}$ is continuous, \chgd{the function}
\[
\chgd{\sqrt{|\dot{\gamma}^1(t_i)|^2 + |\dot{\gamma}^2(t_i)|^2}}
\]
is \chgd{continuous and hence} integrable. So,
\[
\sum_{\chgd{t_i \in \mu, i \geq 1}} \sqrt{|\dot{\gamma}^1(t_i)|^2 + |\dot{\gamma}^2(t_i)|^2} |t_i-t_{i-1}|
\]
is a Riemann Sum, and
\[
\sup_{\mu \in \mathcal{P}} \sum_{\chgd{t_i \in \mu, i \geq 1}} \sqrt{|\dot{\gamma}^1(t_i)|^2 + |\dot{\gamma}^2(t_i)|^2} |t_i-t_{i-1}|
 = \int_{0}^{1}  \sqrt{|\dot{\gamma}^1(t)|^2 + |\dot{\gamma}^2(t)|^2} dt
\]
Then,
\[
\left |\mathcal{L}_K(\gamma) - \int_{0}^{1}  \sqrt{|\dot{\gamma}^1(t)|^2 + |\dot{\gamma}^2(t)|^2} dt\right | < \eps
\]
This holds for any $\eps > 0$, so letting $\eps \to 0$ we conclude 
\[
\mathcal{L}_K(\gamma) = \int_{0}^{1}  \sqrt{|\dot{\gamma}^1(t)|^2 + |\dot{\gamma}^2(t)|^2} dt.
\]
This proves  $\mathcal{L}_{cc}(\gamma) = \mathcal{L}_{K}(\gamma)$ which in particular implies \Cref{th:lengthsame}.
\end{proof}

\subsection*{Acknowledgement}
This is part of a Undergraduate Research Project with Dr. Schikorra. Funding was provided by NSF Career DMS-2044898.

\bibliographystyle{abbrv}
\bibliography{bib}

\end{document}